\title{Krasovskii-Subbotin approach to mean field type differential games}
\author{Yurii Averboukh\thanks{Krasovskii Institute of Mathematics and Mechanics,\ \ \texttt{e-mail: ayv@imm.uran.ru, averboukh@gmail.com}}{ }\thanks{
		Ural Federal University}}
\date{}
\begin{document}
\maketitle
	
\begin{abstract}
	A  mean field type differential game is a mathematical model of a large system of  identical agents under mean-field interaction controlled by two players with opposite purposes. We study the case when the dynamics of each agent is given by ODE and the players can observe the distribution of the agents. We construct suboptimal strategies and prove the existence of the value function. 
	\keywords{mean field type differential games; mean field type control; extremal shift; programmed iteration method}
	\msccode{49N70, 91A24, 49N35, 60K35}
\end{abstract}	

\section{Introduction}
The paper is concerned with the deterministic mean field type control system governed by two players. We assume that the purposes of the players are opposite. This problem can be called zero-sum mean field type differential game.

Originally, the theory of differential games deals with the finite dimensional  systems controlled by two players. It started with the seminal work by Isaacs~\cite{isaacs}. The mathematical analysis of zero-sum differential games was developed in 1970s (see \cite{bardi}, \cite{elliot_kalton}, \cite{friedman}, \cite{NN_PDG_en}, \cite{varaya_lin}  and references therein). There are several equivalent approaches to the formalization of the zero-sum differential games. First one developed by Krasovskii, Subbotin and their followers (see~\cite{NN_PDG_en},~\cite{Subb_book}) presumes that the players are informed about the current state of the system and form their controls stepwise. Krasovskii and Subbotin  called this approach feedback. However, in fact the realization of such strategy requires a short-term memory. Within the framework of the second approaches it is assumed that the player's strategy is a nonanticipative response on a control of his/her partner \cite{elliot_kalton}, \cite{varaya_lin}. 
Note that  value function of the zero-sum differential game is a viscosity (minimax) solution to the corresponding Hamilton-Jacobi equation \cite{bardi}, \cite{Subb_book}. 

The  mean field type control  theory is concerned with  large systems of agents optimizing their common payoff in the limit case when the number of agents tends to infinity.  The main assumptions of the mean field approach are: the agents are identical and the dynamics of each agent depends on his/her state and on the distribution of the states of other agents. This leads to control problem in the space of probabilities. First mean field type control problems were considered in \cite{ahmed_ding_controlld}. Note that the mean field type control theory is a counterpart of the mean field game theory proposed in~\cite{Lions01},~\cite{Lions02},~\cite{Huang5}. The  difference between these approaches is that within the framework of mean field games each agent maximizes his/her outcome.

Nowadays, the mean field type control systems are studied primary for the case of dynamics given by SDE (see \cite{Bensoussan_Frehse_Yam_book} and reference therein). Forward-backward stochastic differential equation for mean field type control systems were studied in  \cite{Andersson_Djehiche_2011}, \cite{Buckdahn_Boualem_Li_PMP_SDE}, \cite{Carmona_Delarue_PMP}.  Dynamic programming principle for these systems was also developed (see  \cite{Bayraktar_Cosso_Pham_randomized}, \cite{Bensoussan_Frehse_Yam_equation}, \cite{Lauriere_Pironneau_DPP_MF_control}).  The existence theorem for the optimal control is proved in~\cite{Bahlali_Mezerdi_Mezerdi_existence}. The link between mean field control theory and Hamilton-Jacobi PDEs in the  space of probabilities was studied in~\cite{Pham_Wei_2015_DPP_2016},~\cite{Pham_Wei_2015_Bellman}. Additionally, we mention the papers \cite{Marigonda_Cavagnari}, \cite{Marigonda_et_al_2015}, \cite{Pogodaev} devoted especially to the deterministic mean field type control problem.

Mean field type differential games arise quite naturally when we examine  mean field type control systems assuming that the dynamics of each agents is affected by a disturbances. The pursuit-evasion game where a large group of pursuers tries to chase a large group of evaders can be also treated within the framework of mean field type differential games. Mean field type differential games can be regarded as continuous-time dynamical games in the space of probabilities. Such games appear also in the analysis of  differential games with incomplete information \cite{Cardaliaguet_Quincampoux}, \cite{Cardaliaguet_Souquerie}. In the last case the dynamics of the system does not depend explicitly on the probability distribution.

The  mean field type differential games were previously considered in \cite{Djehiche_Hamdine}. In that paper the  open-loop solution of the differential game was studied for the case when dynamics is given by SDE. 

Aiming to develop the feedback approach to mean field type differential games we assume that the players are informed about the state of the game. Recall that the state of the mean field type differential game is a probability describing the current distribution of all agents. We apply the methodology developed by  Krasovskii and Subbotin  to deterministic mean field type differential games. 
The main idea of this approach is to 
\begin{itemize}
	\item introduce the notions of $u$- and $v$-stability,
	\item given a $u$-stable (respectively, $v$-stable) function, construct a suboptimal strategy of the first (respectively, second) player
	\item prove that there exists a value function which is simultaneously $u$- and $v$-stable.
\end{itemize} 
In the paper to prove the existence theorem we adapt the programmed iteration method first proposed in \cite{Chentsov_Math_sb}, \cite{Chistyakov}, \cite{subb_chen}. Originally, programmed iteration method served as an analytical tool for computing the value function of a finite dimensional differential game.  Note that the close construction were used in \cite{cardal_0}, \cite{cardal_1} where the numerical schemes for differential games were developed.

The paper is organized as follows. In Section \ref{sect:preliminaries} we introduce the  general definitions and notations used throughout the paper. In Section \ref{sect:result} we define the feedback strategies, extend the notion of stability to mean field case and formulate the main results. Section \ref{sect:extremal} is devoted to the construction of suboptimal strategies based on  extremal shift. Finally, in Section \ref{sect:pim} we develop the programmed iteration method  for mean field type differential game and prove the existence theorem for the value function.

\section{General notions and definitions}\label{sect:preliminaries}
If $(X,\rho_X)$ is a separable metric space satisfying the Radon property, then denote by $\mathcal{P}(X)$ the set of probabilities on $X$. Here and below we assume that $(X,\rho_X)$ is endowed by Borel $\sigma$-algebra.  Further, let $\mathcal{P}^2(X)$ be the set of all probabilities $m$ on $X$ such that, for some (and, thus, any) $x_*\in X$,
$$\int_X\rho_X^2(x,x_*)m(dx)<\infty. $$ Denote by $W_2$ the 2-Wasserstein metric on $\mathcal{P}^2(X)$ defined as follows: if $m_1,m_2\in \mathcal{P}^2(X)$, then
\begin{equation*}
W_2(m_1,m_2)\triangleq \left[\inf_{\pi\in \Pi(m_1,m_2)}\int_{X\times X}\rho_X^2(x_1,x_2)\pi(d(x_1,x_2))\right]^{1/2}.
\end{equation*}
Here $\Pi(m_1,m_2)$ denotes the set of plans between $m_1,m_2$, i.e., $\Pi(m_1,m_2)$ is a set of probabilities $\pi\in\mathcal{P}(X\times X)$ such that for any measurable  $\Upsilon\subset X$
$\pi(\Upsilon\times X)=m_1(\Upsilon)$, $\pi(X\times \Upsilon)=m_2(\Upsilon)$. 

Note that if $X$ is Polish, then $\mathcal{P}^2(X)$ is also Polish.  The sets $\mathcal{P}(X)$, $\mathcal{P}^2(X)$ coincide when  $X$ is a compact. Moreover, in this case $\mathcal{P}^2(X)$ is compact and $W_2$ metricizes the narrow convergence \cite{Ambrosio}.

Now, let $(X,\rho_X)$ and $(Y,\rho_Y)$ be separable metric spaces satisfying Radon property. The function $b:X\rightarrow \mathcal{P}(Y)$ is called weakly measurable if, for any $\phi\in C_b(X\times Y)$, the function
$$x\mapsto \int_Y\phi(x,y)b(x,dy) $$ is measurable. Here and below we write $b(x,dy)$ instead of $b(x)(dy)$.
Denote by $\mathrm{WM}(X,Y)$ the space of weakly measurable functions $b$ from $X$ to $\mathcal{P}(Y)$.  The set of usual measurable functions is embedded into $\mathrm{WM}(X,Y)$ in the following way: if $h:X\rightarrow Y$ is measurable, then put
\begin{equation}\label{intro:h_delta}
b_h(x)\triangleq \delta_{h(x)}. 
\end{equation} Here $\delta_y$ stands for the Dirac measure concentrated in $y$. 

If $(Z,\rho_Z)$  is also a  separable space satisfying the Radon property, $\xi\in\mathcal{P}(Y)$, $\zeta\in\mathcal{P}(Z)$ then let $\xi\zeta$ stand for the product of probabilities, i.e., $\xi\zeta$ is a probability on $Y\times Z$ defined by the rule: for $\phi\in C_b(Y\times Z)$
\begin{equation*}\label{intro:product_probabilities}
\int_{Y\times Z}\phi(y,z)(\xi\eta)(d(y,z))\triangleq \int_Y\int_Z\phi(y,z)\xi(dy)\zeta(dz).
\end{equation*}
%This provides the embedding of $\mathcal{P}(Y)\times\mathcal{P}(Z)$ into $\mathcal{P}(Y\times Z)$.

If $b\in \mathrm{WM}(X,Y)$, $c\in \mathrm{WM}(X,Z)$, then denote by $bc$ the weakly measurable function from $X$ to $\mathcal{P}(Y\times Z)$ given by: for $x\in X$,
\begin{equation}\label{intro:prod_funct}
(bc)(x,d(y,z))\triangleq b(x,dy)c(x,dz).
\end{equation} 

Let $m$ be a finite measure on $X$, $b\in\mathrm{WM}(X,Y)$. Denote by $m\star b$ the probability on $X\times Y$ given by the following rule: for $\phi\in C_b(X\times Y)$,
\begin{equation}\label{intro:product}
\int_{X\times Y}\phi(x,y)(m\star b)(d(y,z)) \triangleq \int_{X}\int_{Y}\phi(x,y)b(x,dy)m(dx).
\end{equation}

Further, denote by $\Lambda(X,m,Y)$ the quotient space of $\mathrm{WM}(X,Y)$ w.r.t. equivalence given by coincidence $m$-a.e., i.e., $b_1$ is equivalent to $b_2$ iff $m\star b_1=m\star b_2$. We say that $\{b_k\}_{k=1}^\infty$  converges narrowly to $b$ if $\{m\star b_k\}$ converges narrowly to $m\star b$, i.e.,  for any $\phi\in C_b(X\times Y)$,
$$\int_X\int_Y\phi(x,y)b_k(x,dy)m(dx)\rightarrow \int_X\int_Y\phi(x,y)b(x,dy)m(dx).$$

Notice, that if $X$ and $Y$ are Polish, then the space $\Lambda(X,Y,m)$ is Polish. The set of measurable functions from $X$ to $Y$ is dense in $\Lambda(X,m,Y)$ \cite{Warga}. Moreover, when $X$ and $Y$ are metric compact,  this property is inherited by $\Lambda(X,m,Y)$. 

%If $X$ and $Y$ are Polish, then the space $\Lambda(X,m,Y)$ is also Polish space \cite{Ambrosio}, \cite{Warga}. Moreover, the usual measurable functions are dense in $\Lambda(X,m,Y)$ \cite{Warga}. If, additionally, $X$ and $Y$ are compact, then this property is inherited by $\Lambda(X,m,Y)$.

Let $\pi$ be a measure on $X\times Y$, then denote by $\pi(\cdot|x)$ (respectively, $\pi(\cdot|y)$) the disintegration of $\pi$ with respect to its marginal on $X$ (respectively, $Y$). We refer to   \cite[III.70]{Delacherie_Meyer} for the the existence result of the disintegration of the probability.

If $(\Omega',\mathcal{F}')$, $(\Omega'',\mathcal{F}'')$ are measurable spaces, $m$ is a probability on $\mathcal{F}'$, $h:\Omega'\rightarrow\Omega''$ is measurable, then denote by $h_\# m$ the push-forward measure: if $\Upsilon\in\mathcal{F}''$,
$$(h_\# m)(\Upsilon)\triangleq m(h^{-1}(\Upsilon)). $$

For simplicity we assume periodic boundary conditions, i.e.,  the phase space is $\td\triangleq \rd\slash\mathbb{Z}^d$. This means that each element $x\in\td$ is an equivalence class $[x']\triangleq \{y'\in\rd:y'\sim x'\}$, where $x'\sim y'$ iff $x'-y'\in\mathbb{Z}^d$. % can be regarded as the set $x'+\mathbb{Z}^d$ where $x'\in\rd$. %One can imagine $\td$ as the $d$-dimensional cube $[-1/2,1/2]^d$ with identification of opposite faces. 
The function $$\td\times\td\ni(x,y)\mapsto \|x-y\|\triangleq\min\{\|x'-y'\|:x'\in x,y'\in y\} $$ provides the metric on $\td$.

Denote by $\mathcal{C}$ the space $C([0,T],\td)$.  With some abuse of notation, for $x(\cdot),y(\cdot)\in\mathcal{C}$, we set
$$\|x(\cdot)-y(\cdot)\|\triangleq \max_{t\in [0,T]}\|x(t)-y(t)\|. $$ 
Further, for $t\in [0,T]$, denote by $e_t$ the evaluation operator from $\mathcal{C}$ to $\td$ defined by the following rule: if $x(\cdot)\in \mathcal{C}$, then 
$$e_t(x(\cdot))\triangleq x(t). $$
%Note $\mathcal{P}(\mathcal{C})=\mathcal{P}^2(\mathcal{C})$. % However, the topology produced by $W_2$ is stronger that one produced by narrow convergence.

Since, for every $x(\cdot),y(\cdot)\in\mathcal{C}$, $\|e_t(x(\cdot))-e_t(y(\cdot)))\|\leq \|x(\cdot)-y(\cdot)\| $, we have that if $\chi_1,\chi_2\in\mathcal{P}^2(\mathcal{C})$, $t\in [0,T]$, then
\begin{equation}\label{ineq:wass_motions}
W_2(e_t{}_\#\chi_1,e_t{}_\#\chi_2)\leq W_2(\chi_1,\chi_2). 
\end{equation}

Below we call any function of time taking values in $\mathcal{P}^2(\td)$ a flow of probabilities.
Denote by $\mathcal{M}$ the set of continuous functions from $[0,T]$ to $\mathcal{P}^2(\td)$. Further, let $\mathcal{M}^{R}$ denote the set of flows of probabilities $m(\cdot)\in\mathcal{M}$ such that $$W_2(m(t'),m(t''))\leq R|t'-t''|. $$

Assume that the motion of each agent is given by the ordinary differential equation
\begin{equation}\label{sys:agent}
\begin{split}
\frac{d}{dt}x(t)&=f(t,x(t),m(t),u(t,x(t),m(t)),v(t,x(t),m(t))), \\ &t\in [0,T],\ \ x(t)\in \td,\ \ m(t)\in\mathcal{P}^2(\td), \\ &u(t,x(t),m(t))\in U,\ \  v(t,x(t),m(t))\in V.
\end{split}
\end{equation}

Here $m(t)$ stands for the distribution of all agents;  $U$ (respectively, $V$) denotes the control space of the first (receptively, second) player; $f$ is a function defined on $[0,T]\times\td\times\mathcal{P}^2(\td)\times U\times V$ with values in $\rd$. Integrating formally~(\ref{sys:agent}), we can write down the equation on $m(t)$  in the following form:
\begin{equation*}
\frac{d}{dt}m(t)=\langle f(t,\cdot,m(t),u(t,\cdot,m(t)),v(t,\cdot,m(t))),\nabla\rangle m(t).
\end{equation*} Here $\cdot$ stands for $x$. 

We assume that the controls $u(t,x,m)$ (respectively, $v(t,x,m)$) are chosen by the first (respectively, second) player to minimize (respectively, maximize) the objective function
$$g(m(T)). $$

We assume that
\begin{itemize}
	\item the control sets $U$ and $V$ are metric compacts;
	\item the functions $f$ and $g$ are continuous;
	\item the function $f$ is Lipschitz continuous w.r.t $x$ and $m$;
	\item the Isaacs condition holds true: for any $t\in [0,T]$, $x\in\td$, $m\in \mathcal{P}^2(\td)$, $w\in\rd$:
	\begin{equation*}
	\min_{u\in U}\max_{v\in V}\langle w,f(t,x,m,u,v)\rangle=
	\max_{v\in V}\min_{u\in U}\langle w,f(t,x,m,u,v)\rangle.
	\end{equation*}
\end{itemize}

In the paper we use  relaxed controls.  

Let $\mathcal{U}\triangleq \Lambda([0,T],\lambda,U)$, $\mathcal{V}\triangleq \Lambda([0,T],\lambda,V)$ be space of relaxed controls of the first and second players respectively. Here $\lambda$ stands for the Lebesgue measure on $[0,T]$. Denote by $\mathcal{U}^0$ (respectively, $\mathcal{V}^0$) the set of measurable functions from $[0,T]$ to $U$ (respectively, $V$). Notice that an element of $U$ (respectively $V$) can be regarded as a contant control of the first (respectively, second) player. Using this and the embedding given by (\ref{intro:h_delta}), we  assume that
\begin{equation}\label{inclusion:control_spaces}
U\subset\mathcal{U}^0\subset \mathcal{U},\ \ V\subset\mathcal{V}^0\subset \mathcal{V}. 
\end{equation}

Denote by $\mathcal{W}$ the set of joint relaxed controls of both players on $[0,T]$, i.e.,
$\mathcal{W}\triangleq \Lambda([0,T],\lambda,U\times V)$.  Further, using (\ref{intro:prod_funct}), without loss of generality we assume that
$$\mathcal{U}\times\mathcal{V}\subset\mathcal{W}. $$

If $s\in [0,T]$, $y\in\td$, $m(\cdot)\in \mathcal{M}$, $\eta\in \mathcal{W}$, then denote by $x(\cdot,s,y,m(\cdot),\eta)$ the solution of the initial value problem
\begin{equation}\label{sys:agent_gen}
\frac{d}{dt}x(t)=\int_{U\times V}f(t,x(t),m(t),u,v)\eta(t,d(u,v)),\ \ x(s)=y. 
\end{equation} Note that $x(\cdot,s,y,m(\cdot),\eta)$ is a motion of a representative player produced by the relaxed control of both players $\eta$.

Further, given $s\in [0,T]$, $m(\cdot)\in\mathcal{M}$, denote by $\mathrm{traj}^{s}_{m(\cdot)}:\td\times\mathcal{W}\rightarrow \mathcal{C}$ the operator which assigns to each pair $(y,\eta)$ the motion $x(\cdot,s,y,m(\cdot),\eta)$.

Now let us introduce the sets of distributions of controls. Put 
$$\mathcal{A}\triangleq \mathrm{WM}(\td,\mathcal{U}), \ \ \mathcal{B}\triangleq \mathrm{WM}(\td,\mathcal{V}),\ \ \mathcal{D}\triangleq \mathrm{WM}(\td,\mathcal{W}). $$ The set $\mathcal{A}$ (respectively, $\mathcal{B}$) is the set of distributions of relaxed controls of the first (respectively,  second) player; while $\mathcal{D}$ is the set of distributions of joint relaxed controls of both players.

In some cases we will assume that the players use constant controls. Let $\mathcal{A}^c\triangleq \mathrm{WM}(\td,U)$ denote the set of distributions of constant controls of the first player; and let $\mathcal{B}^c\triangleq \mathrm{WM}(\td,V)$ be the set  distributions of constant controls of the second player.

Due to identification of measurable control with the corresponding relaxed control (see (\ref{inclusion:control_spaces})) we get
$$\mathcal{A}^c\subset\mathcal{A},\ \ \mathcal{B}^c\subset\mathcal{B}. $$

If $\alpha\in\mathcal{A}$, then denote by $\mathcal{D}_1[\alpha]$ the set of distributions $\varkappa\in \mathrm{WM}(\td,\mathcal{U}\times\mathcal{V})$ such that, for any $x\in \td$, the marginal distribution of $\varkappa(x)$ on $\mathcal{U}$ is $\alpha(x)$. Informally speaking, elements of $\mathcal{D}_1[\alpha]$ are distribution of  controls of both players consistent with the distribution of the first player's controls $\alpha$. 
Further, let $\mathcal{D}_1^0[\alpha]$ be the set of $\varkappa\in \mathcal{D}_1[\alpha]$ such that $\varkappa(x)$ is concentrated on $\mathcal{U}\times\mathcal{V}^0$, i.e., in this case we admit only usual controls of the second player.   Analogously, let $\mathcal{D}_2[\beta]$ (respectively, $\mathcal{D}_2^0[\beta]$) denote the set of $\varkappa\in\mathrm{WM}(\td,\mathcal{U}\times\mathcal{V})$ (respectively, $\varkappa\in\mathrm{WM}(\td,\mathcal{U}^0\times\mathcal{V})$) such that, for each $x\in\td$, the marginal distribution of $\varkappa(x)$ on $\mathcal{V}$ is $\beta(x)$. 

\begin{definition}\label{def:motion}	Let $s\in [0,T]$, $m_*\in\mathcal{P}^2(\td)$, $\varkappa\in\mathcal{D}$. We say that $m(\cdot)=m(\cdot,s,m_*,\varkappa)\in \mathcal{M}$ is a flow of probabilities generated by distribution of joint relaxed controls of the players $\varkappa$ if there exists a probability $\gamma\in \mathcal{P}^2(\mathcal{C})$ such that
	$$m(t)=e_t{}_\#\gamma,\ \ m(s)=m_*,$$ and, 
	$$\gamma=\mathrm{traj}^{s}_{m(\cdot)}{}_\#(m_*\star\varkappa). $$
\end{definition}

The existence and uniqueness theorem for the flow of probabilities  is a slight revision of~\cite[Theorem I.1.1]{Sznitman} (see also \cite[Theorem 7.11]{Kol_book}).

\section{Main result}\label{sect:result}

We consider the concept of feedback strategies going back to formalization of zero-sum differential games proposed by Krasovskii and Subbotin. 
In this case the original differential game is replaced with the couple of games, namely, upper and lower games.  In the upper game the first player uses feedback strategy and forms his control stepwise, when the second player forms his control arbitrarily. In the lower game the players change their places. If the upper and lower value functions coincide, then we say that there exists a value function of the original game. 

The strategy of the first player is  a function $\mathfrak{u}:[0,T]\times\mathcal{P}^2(\td)\rightarrow \mathcal{A}^c$. Analogously,  any function $\mathfrak{v}:[0,T]\times\mathcal{P}^2(\td)\rightarrow \mathcal{B}^c$ is called a strategy of the second player. 

Let us start with the upper game. Let $\mathfrak{u}$ be a strategy of the first player, $t_0\in [0,T]$ be an initial time, $m_0\in\mathcal{P}^2(\td)$ be an initial distribution of players, and let $\Delta=\{t_i\}_{i=0}^N$ be a partition of $[t_0,T]$. 

\begin{definition}
	We say that a flow of probabilities  $m(\cdot):[t_0,T]\rightarrow \mathcal{P}^2(\td)$ is  generated by $t_0$, $m_0$, $\mathfrak{u}$ and $\Delta$ if $m(t_0)=m_0$ and there exist distributions of controls $\varkappa_i\in\mathcal{D}^0_1[\mathfrak{u}[t_i,m(t_i)]]$, $i=0,\ldots,N-1$, such that, for $t\in [t_i,t_{i+1}]$, $i=0,\ldots, N-1$, 
	$$m(t)=m(t,t_i,m(t_i),\varkappa_i).$$

	We denote the set of flows of probabilities generated by  $t_0$, $m_0$, $\mathfrak{u}$, and $\Delta$ by $\mathcal{X}_1(t_0,m_0,\mathfrak{u},\Delta)$.
\end{definition} If the first player uses the strategy $\mathfrak{u}$ and corrects his/her control at times of partition $\Delta$, then his/her outcome is evaluated by the value 
$$J_1(t_0,m_0,\mathfrak{u},\Delta) \triangleq \sup\{g(m(T)):m(\cdot)\in \mathcal{X}_1(t_0,m_0,\mathfrak{u},\Delta)\}.$$

Analogously, if $t_0$ is an initial time, $m_0$ is an initial distribution of players, $\mathfrak{v}$ is a second player's strategy, $\Delta$ is a partition of the time interval $[t_0,T]$, one can introduce the  set of flows of probabilities generated by $t_0$, $m_0$, $\Delta$ and $\mathfrak{v}$. Denote it by 
$\mathcal{X}_2(t_0,m_0,\mathfrak{v},\Delta)$.
The value 
$$J_2(t_0,m_0,\mathfrak{v},\Delta) \triangleq \inf\{g(m(T)):m(\cdot)\in \mathcal{X}_2(t_0,m_0,\mathfrak{v},\Delta)\}$$ provides an evaluation of the outcome when the system starts at $(t_0,m_0)$ and the second player chooses the strategy $\mathfrak{v}$ and the partition $\Delta$.

The upper value of the game at $(t_0,m_0)$ is equal to
$$\Gamma_1(t_0,m_0)\triangleq \inf_{\mathfrak{u},\Delta}J_1(t_0,m_0,\mathfrak{u},\Delta). $$ The lower value of the game is defined in the same way:
$$\Gamma_2(t_0,m_0)\triangleq \sup_{\mathfrak{v},\Delta}J_2(t_0,m_0,\mathfrak{v},\Delta). $$
Clearly,
\begin{equation*}\label{ineq:gammas}
\Gamma_1\geq \Gamma_2. 
\end{equation*}
If $\Gamma_1=\Gamma_2$, then we say that the game has a value.

To formulate the main result we require a notions of $u$- and $v$- stability. 
\begin{definition}\label{def:u_stable} We say that a lower semicontinuous function $\psi_1:[0,T]\times \mathcal{P}^2(\td)\rightarrow\mathbb{R}$ is  $u$-stable if
	\begin{itemize}
		\item for any $m\in\mathcal{P}^2(\td)$, $g(m)\leq \psi_1(T,m)$;
		\item for any $s,r\in [0,T]$, $s<r$, $m_*\in\mathcal{P}^2(\td)$, $\beta\in\mathcal{B}^c$, there exists a distribution of  players' controls $\varkappa\in \mathcal{D}_2[\beta]$ such that
		$$\psi_1(s,m_*)\geq \psi_1(r,m(r,s,m_*,\varkappa)). $$ 
	\end{itemize}
\end{definition}

\begin{definition}\label{def:v_stable} An upper semicontinuous function $\psi_2:[0,T]\times \mathcal{P}^2(\td)\rightarrow\mathbb{R}$ is called $v$-stable if 
	\begin{itemize}
		\item for any $m\in\mathcal{P}^2(\td)$, $g(m)\geq \psi_2(T,m)$
		\item
		for any $s,r\in [0,T]$, $s<r$, $m_*\in\mathcal{P}^2(\td)$, $\alpha\in\mathcal{A}^c$, there exists a distribution of players' controls $\varkappa\in \mathcal{D}_1[\alpha]$ such that
		$$\psi_2(s,m_*)\leq \psi_2(r,m(r,s,m_*,\varkappa)).$$
	\end{itemize}
\end{definition}

\begin{theorem}\label{th:extremal}
	Let $\psi_1$ be a $u$-stable function, then, for any $(t_0,m_0)\in [0,T]\times\mathcal{P}^2(\td)$,
	$$\Gamma_1(t_0,m_0)\leq \psi_1(t_0,m_0). $$  If $\psi_2$ is a $v$-stable function, then 
	$$\Gamma_2(t_0,m_0)\geq \psi_2(t_0,m_0). $$
\end{theorem}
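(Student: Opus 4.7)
The two inequalities are symmetric, so I focus on $\Gamma_1(t_0,m_0)\le\psi_1(t_0,m_0)$; the other case follows by swapping players and replacing $u$-stability with $v$-stability. The plan is the Krasovskii–Subbotin extremal shift rule adapted to the Wasserstein setting. Fix $\varepsilon>0$; I will construct a strategy $\mathfrak{u}^*$ of the first player and a mesh threshold $\delta>0$ so that every $m(\cdot)\in\mathcal{X}_1(t_0,m_0,\mathfrak{u}^*,\Delta)$ with $|\Delta|<\delta$ satisfies $g(m(T))\le\psi_1(t_0,m_0)+\varepsilon$. Taking the infimum over $\mathfrak{u}$ and $\Delta$ then gives $\Gamma_1(t_0,m_0)\le\psi_1(t_0,m_0)+\varepsilon$, and $\varepsilon\to 0$ completes the argument.

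\textbf{Companion flow and extremal aim.} In parallel with the actual flow $m(\cdot)$, I inductively build a companion flow $\mu(\cdot)$ with $\mu(t_0)=m_0$ along which $\psi_1$ is nonincreasing at partition points. On the step $[t_i,t_{i+1}]$ I first approximate the second player's realized $\mathcal{V}$-marginal by a distribution of constant controls $\beta_i\in\mathcal{B}^c$, and invoke $u$-stability (Definition~\ref{def:u_stable}) for the data $(t_i,t_{i+1},\mu(t_i),\beta_i)$ to obtain $\varkappa^\mu_i\in\mathcal{D}_2[\beta_i]$ with
\begin{equation*}
\psi_1\bigl(t_{i+1},m(t_{i+1},t_i,\mu(t_i),\varkappa^\mu_i)\bigr)\le \psi_1(t_i,\mu(t_i)),
\end{equation*}
and I let $\mu(t_{i+1})$ be this endpoint. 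Simultaneously, the strategy value $\mathfrak{u}^*[t_i,m(t_i)]\in\mathcal{A}^c$ is an extremal aim: choose an optimal transport plan $\pi_i$ between $m(t_i)$ and $\mu(t_i)$, and for each $x\in\td$ select a constant control $u^*(x)\in U$ attaining the Isaacs minimum of $u\mapsto\max_{v\in V}\langle y-x,f(t_i,x,m(t_i),u,v)\rangle$ averaged against the conditional $\pi_i(\cdot|x)$ in $y$. The Isaacs condition turns this pointwise choice into an advantage that is uniform in the adversary's $v$.

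\textbf{Tracking and conclusion.} The key estimate is of Gronwall type in Wasserstein distance:
\begin{equation*}
W_2^2(m(t_{i+1}),\mu(t_{i+1}))\le (1+C|\Delta|)\,W_2^2(m(t_i),\mu(t_i))+o(1)\cdot(t_{i+1}-t_i),
\end{equation*}
obtained by propagating the coupling $\pi_i$ (joined with the couplings of $\varkappa_i$ and $\varkappa^\mu_i$) along the ODE~(\ref{sys:agent_gen}) for both flows, using Lipschitz continuity of $f$ in $(x,m)$, the extremal selection above, and the fact that both plays share the same $\beta_i$. Iterating and letting $|\Delta|\to 0$ gives $W_2(m(T),\mu(T))\to 0$. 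Continuity of $g$ then yields
\begin{equation*}
g(m(T))\le g(\mu(T))+\omega_g\bigl(W_2(m(T),\mu(T))\bigr)\le \psi_1(T,\mu(T))+o(1)\le \psi_1(t_0,m_0)+\varepsilon,
\end{equation*}
where $\omega_g$ is a modulus of continuity of $g$ and the last inequality holds once the mesh is small enough, using the inductive chain $\psi_1(T,\mu(T))\le\cdots\le\psi_1(t_0,\mu(t_0))=\psi_1(t_0,m_0)$.

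\textbf{Main obstacle.} The delicate step is reconciling three asymmetries at once: $u$-stability supplies a companion only for constant-in-time second-player marginals $\beta\in\mathcal{B}^c$, while the adversary's $\varkappa_i\in\mathcal{D}_1^0[\mathfrak{u}^*[t_i,m(t_i)]]$ has $\mathcal{V}$-marginal in the larger class $\mathrm{WM}(\td,\mathcal{V}^0)$ of distributions of merely measurable functions; the extremal map $x\mapsto u^*(x)$ must be a \emph{weakly measurable} selection from the Isaacs minimizer so that $\mathfrak{u}^*[t_i,m(t_i)]$ genuinely lies in $\mathcal{A}^c$; and the tracking must be carried out in $W_2$ on $\mathcal{P}^2(\td)$ rather than the sup norm in $\rd$. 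Controlling the constant-control approximation of $\beta_i$ by a residual absorbed into the $o(1)$ term of the Gronwall estimate — using equicontinuity of $f$ in $t$ — while simultaneously securing a weakly measurable extremal selection, is where the technical weight of Section~\ref{sect:extremal} will lie.
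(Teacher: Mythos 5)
There is a genuine gap in the tracking step, and it is structural rather than technical. You drive the companion flow by (an approximation of) the adversary's \emph{realized} $\mathcal{V}$-marginal $\beta_i$ and then claim the Gronwall estimate ``using the fact that both plays share the same $\beta_i$.'' But with a shared $v$ the drift difference along the coupling contains the term $\langle x'-y',f(t,x,m,u_*,v)-f(t,y,\mu,u,v)\rangle$, where $u_*$ is your extremal choice and $u$ is whatever control the $u$-stability property hands you for the companion. The Isaacs condition gives $\langle w,f(\cdot,u_*,v)\rangle\le\min_u\max_v\langle w,f(\cdot,u,v)\rangle$, but it gives no lower bound on $\langle w,f(\cdot,u,v)\rangle$ for the companion's arbitrary $u$ against the \emph{same} $v$; the saddle inequality only compares $f(\cdot,u_*,v)$ with $f(\cdot,u,v^*)$ where $v^*$ is the second player's \emph{extremal} response $\hat v$. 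This is exactly how the paper proceeds: in Lemma \ref{lm:flow_motion} the distribution fed into $u$-stability is $\beta^*_i(y)=\hat v(t_i,\cdot,y,m_i)_\#\pi_i(\cdot|y)$, built from the optimal plan, not from the adversary's play, and the actual and companion flows are coupled as $(\hat u,\text{arbitrary})$ versus $(\text{arbitrary},\hat v)$ so that Lemma \ref{lm:agent_motion} applies pairwise. Your variant also forces you to approximate a realized time-dependent marginal in $\mathrm{WM}(\td,\mathcal{V}^0)$ by a constant-control distribution in $\mathcal{B}^c$; that error is governed by the oscillation of $v(\cdot)$ within a step, not by the modulus of continuity of $f$ in $t$, and it is not $o(1)$ as the mesh shrinks. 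The paper's construction avoids the issue entirely because $\hat v$ is constant on each step by design.

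A secondary deviation that would also break the estimate: you select a single $u^*(x)\in U$ minimizing the Isaacs expression \emph{averaged} against $\pi_i(\cdot|x)$ in $y$. The saddle inequality is pointwise in the direction $w=x'-y'$, so each companion point $y$ needs its own extremal $u$; min-of-average is not average-of-min. This is why the paper takes $\mathfrak{u}_\varepsilon[s,m](x)=\hat u(s,x,\cdot,m)_\#\pi(\cdot|x)$, a genuinely measure-valued element of $\mathcal{A}^c=\mathrm{WM}(\td,U)$. (Your direct continuation of the companion $\mu(\cdot)$, in place of the paper's re-minimization of $\psi_1$ over a shrinking $W_2$-ball at each step, is a legitimate alternative for closing the monotonicity chain and is not the problem.)
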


One way to prove the existence and compute the value function is the programmed iteration method first proposed for zero-sum differential game by Chentsov \cite{Chentsov_Math_sb}. 

Let $\mathbb{R}^{[0,T]\times\mathcal{P}^2(\td)}$ denote the set of all functions from $[0,T]\times\mathcal{P}^2(\td)$ to $\mathbb{R}$.

Define the operator $\Phi:\mathbb{R}^{[0,T]\times\mathcal{P}^2(\td)}\rightarrow \mathbb{R}^{[0,T]\times\mathcal{P}^2(\td)}$   by the following rule: if $\omega\in \mathbb{R}^{[0,T]\times\mathcal{P}^2(\td)}$, $s\in [0,T]$, $\mu\in \mathcal{P}^2(\td)$, then
$$\Phi[\omega](s,\mu)\triangleq \sup_{r\in [s,T]}\sup_{\beta\in\mathcal{B}^c} \inf_{\varkappa\in\mathcal{D}_2[\beta]}\omega(r,m(r,s,\mu,\varkappa)). $$

Let \begin{equation}\label{intro:omega_null}
\omega_0(s,\mu)\triangleq \sup_{\beta\in \mathcal{B}^c}\inf_{\varkappa\in\mathcal{D}_2[\beta]} g(m(T,s,\mu,\varkappa)). 
\end{equation}
Further, for $k=1,2,\ldots$, set
\begin{equation}\label{intro:omega_k}
\omega_k\triangleq \Phi[\omega_{k-1}]. 
\end{equation}

Analogously, let us introduce the operator $\Psi:\mathbb{R}^{[0,T]\times\mathcal{P}^2(\td)}\rightarrow\mathbb{R}^{[0,T]\times\mathcal{P}^2(\td)}$  by
$$\Psi[\omega](s,\mu)\triangleq \inf_{r\in [s,T]}\inf_{\alpha\in\mathcal{A}^c} \sup_{\varkappa\in\mathcal{D}_1[\alpha]}\omega(r,m(r,s,\mu,\varkappa)). $$ 
Put
\begin{equation}\label{intro:omega_v_null}
\omega^0(s,\mu)\triangleq \inf_{\alpha\in \mathcal{A}^c}\sup_{\varkappa\in\mathcal{D}_1[\alpha]} g(m(T,s,\mu,\varkappa)), 
\end{equation}
\begin{equation}\label{intro:omega_v_k}
\omega^k\triangleq \Psi[\omega^{k-1}]. 
\end{equation}

\begin{theorem}\label{th:existence}
	There exists a value function of the mean field type differential game~$\Gamma$.  It  is simultaneously $u$- and $v$-stable. For any $s\in [0,T]$, $\mu\in\mathcal{P}^2(\td)$,
	$$\Gamma(s,\mu)=\lim_{k\rightarrow\infty}\omega_{k}(s,\mu)=\lim_{k\rightarrow\infty}\omega^{k}(s,\mu). $$
\end{theorem}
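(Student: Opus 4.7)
The plan is to build a lower semicontinuous $u$-stable function $\omega_*$ satisfying also $\omega_*\le\Gamma_2$. By Theorem~\ref{th:extremal} and the general inequality $\Gamma_1\ge\Gamma_2$, this forces $\Gamma_1\le\omega_*\le\Gamma_2\le\Gamma_1$, so the value $\Gamma:=\Gamma_1=\Gamma_2=\omega_*$ exists; the symmetric analysis of $\Psi$ will produce an upper semicontinuous $v$-stable $\omega^*$ equal to $\Gamma$, yielding both limit identities of the theorem together with simultaneous $u$- and $v$-stability of $\Gamma$. The natural candidate for $\omega_*$ is the pointwise monotone limit of $(\omega_k)$.

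Monotonicity of $(\omega_k)$ follows because substituting $r=s$ in $\Phi$ gives $\Phi[\omega]\ge\omega$. Since $\td$ is a torus, $\mathcal{P}^2(\td)$ is compact and $g$ is bounded, so the uniform bound $\sup|g|$ is propagated along the iteration and $\omega_*(s,\mu):=\lim_k\omega_k(s,\mu)$ exists. An inductive argument -- using continuity of $g$, continuous dependence of $m(r,s,\mu,\varkappa)$ on $(s,\mu,\varkappa)$ from the Lipschitz hypothesis on $f$ and Definition~\ref{def:motion}, and preservation of lower semicontinuity under $\inf_{\varkappa\in\mathcal{D}_2[\beta]}$ on the compact set $\mathcal{D}_2[\beta]$ and under suprema -- shows each $\omega_k$, and hence the monotone supremum $\omega_*$, is lsc. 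Passing to the limit in $\omega_{k+1}=\Phi[\omega_k]$ with the same continuity and compactness inputs yields the fixed-point identity $\omega_*=\Phi[\omega_*]$, from which $u$-stability is immediate: given $s<r$, $m_*$, $\beta\in\mathcal{B}^c$, the infimum in $\Phi[\omega_*](s,m_*)$ is attained at some $\varkappa^\star\in\mathcal{D}_2[\beta]$ because a lsc function attains its infimum on a compact, giving $\omega_*(s,m_*)\ge\omega_*(r,m(r,s,m_*,\varkappa^\star))$; and $g\le\omega_*(T,\cdot)$ holds already because $\omega_0(T,\cdot)=g$. Theorem~\ref{th:extremal} then delivers $\Gamma_1\le\omega_*$.

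The main obstacle is the reverse bound $\omega_*\le\Gamma_2$, which requires a backward-induction construction of admissible second-player strategies from the sup-inf iterates. I would prove $\omega_k\le\Gamma_2$ by induction. For $k=0$: the constant strategy $\mathfrak{v}\equiv\beta$ on the one-step partition $\{s,T\}$ is admissible for $\Gamma_2$, and since $\mathcal{D}^0_2[\beta]\subset\mathcal{D}_2[\beta]$, $J_2(s,\mu,\mathfrak{v},\{s,T\})=\inf_{\varkappa\in\mathcal{D}^0_2[\beta]}g\ge\inf_{\varkappa\in\mathcal{D}_2[\beta]}g$; taking $\sup_\beta$ gives $\omega_0\le\sup_\beta J_2\le\Gamma_2$. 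For the inductive step, fix $\varepsilon>0$ and pick near-optimal $r,\beta$ in $\Phi[\omega_k](s,\mu)$; for each first-player counterplay on $[s,r]$ producing an endpoint measure $m(r)$, the inductive hypothesis supplies an $\varepsilon$-optimal second-player strategy for $\omega_k(r,m(r))$ on $[r,T]$. Selecting this continuation measurably in $m(r)$ (a measurable-selection argument, available thanks to the Polish structure of $\mathcal{P}^2(\td)$) and concatenating with the constant piece $\mathfrak{v}\equiv\beta$ on $[s,r]$ yields an admissible strategy for $\Gamma_2$ whose $J_2$-value is at least $\omega_{k+1}(s,\mu)-O(\varepsilon)$. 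The technical heart is the approximation of a minimizing relaxed control $\varkappa^\star\in\mathcal{D}_2[\beta]$ by non-relaxed counterplays from $\mathcal{D}^0_2[\beta]$, available via the density of measurable functions in $\Lambda$ (Section~\ref{sect:preliminaries}), together with uniform control of the resulting error so that the concatenation remains near-optimal. Letting $\varepsilon\to 0$ gives $\omega_{k+1}\le\Gamma_2$. The squeeze $\Gamma_1\le\omega_*\le\Gamma_2\le\Gamma_1$ then closes the argument, and the entirely parallel analysis of $(\omega^k)$ produces $\omega^*=\Gamma$ usc and $v$-stable.
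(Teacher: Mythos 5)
Your overall architecture (monotone programmed iterations, lower semicontinuity, $u$-stability of the limit, extremal shift giving $\Gamma_1\le\omega_*$) matches the paper's, and those parts are sound modulo the usual compactness/continuous-dependence care. The genuine gap is in the step you yourself identify as the ``main obstacle'': the claim that the backward-induction concatenation yields an admissible strategy for $\Gamma_2$, i.e.\ a \emph{feedback} strategy $\mathfrak{v}:[0,T]\times\mathcal{P}^2(\td)\to\mathcal{B}^c$ with a partition fixed in advance. In your inductive step the continuation played on $(r,T]$ is selected as a function of $m(r)$. At a later partition point $t>r$ a feedback strategy has access only to $(t,m(t))$; two admissible flows that pass through different measures at time $r$ but coincide at time $t$ would require different controls, so the concatenated object is not a function of $(t,m(t))$. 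Moreover the switching time $r$ and the continuation's partition depend on the position $(r,m(r))$, whereas $J_2(t_0,m_0,\mathfrak{v},\Delta)$ requires a single partition $\Delta$ chosen before play. The recursion therefore does not close inside the class of memoryless feedback strategies (it already fails at $k=2$: on $[r_1,r_2)$ one must keep playing a control depending on $m(r_1)$). Measurable selection does not help; the issue is informational, not measurability.

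What your construction actually produces is a stepwise strategy \emph{with memory}, and this is precisely why the paper introduces the auxiliary values $\widehat{\Gamma}_1,\widehat{\Gamma}_2$ over such strategies. The backward induction you describe proves $\widehat{\Gamma}_2\ge\omega_*$, which is weaker than $\Gamma_2\ge\omega_*$ because memory only helps the maximizer ($\widehat{\Gamma}_2\ge\Gamma_2$). The loop is then closed not by your squeeze but by the chain $\omega_*\le\widehat{\Gamma}_2\le\widehat{\Gamma}_1\le\Gamma_1\le\omega_*$ (the last inequality being the extremal-shift bound you already have), which forces $\Gamma_1=\widehat{\Gamma}_1=\widehat{\Gamma}_2=\omega_*$; the symmetric argument with the iterates $\omega^k$ gives $\Gamma_2=\widehat{\Gamma}_2=\widehat{\Gamma}_1=\omega^*$, and only the common term $\widehat{\Gamma}_2$ finally yields $\Gamma_1=\Gamma_2$. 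So you are missing the auxiliary game with memory and the two-sided inequality chain that substitutes for the unavailable bound $\omega_*\le\Gamma_2$. A minor additional remark: the density-of-measurable-controls argument you invoke is unnecessary here, since $\mathcal{D}_2^0[\beta]\subset\mathcal{D}_2[\beta]$ and the infimum over the smaller set is automatically the larger one, which is the direction you need.
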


\section{Extremal shift rule}\label{sect:extremal}
In this section we prove Theorem \ref{th:extremal}. To this end, given the $u$-stable function $\psi_1$ and $\varepsilon>0$, we construct the strategy $\mathfrak{u}_\varepsilon$ such that, for a sufficiently fine partition, the corresponding outcome is estimated by $\psi_1$ with an error vanishing when $\varepsilon\rightarrow 0$.

Since the function $f$ is continuous, the sets $\td$, $\mathcal{P}^2(\td)$, $U$, $V$ are compact, there exists a constant  $C_0$ such that, for all $t\in [0,T]$, $x\in \td$, $m\in\mathcal{P}^2(\td)$, $u\in U$, $v\in V$, \begin{equation}\label{intro:C_0}
\|f(t,x,m,u,v)\|\leq C_0.
\end{equation} 
Therefore, for all $s\in [0,T]$, $m_*\in\mathcal{P}^2(\td)$, $\varkappa\in \mathcal{D}$, $m(\cdot,s,m_*,\varkappa)\in \mathcal{M}^{C_0}$. 

Let the  functions $\varpi_f,\varpi_g:\mathbb{R}\rightarrow [0,+\infty)$ be vanishing  at $0$, continuous at~$0$ and satisfy
$$\|f(t,x,m,u,v)-f(t',x,m,u,v)\|\leq \varpi_f(t-t'), $$
$$|g(m)-g(m')|\leq \varpi_g(W_2(m,m')) $$  for any $t,t'\in [0,T]$, $x\in\td$, $m,m'\in\mathcal{P}^2(\td)$, $u\in U$, $v\in V$. Without loss of generality, one can assume that $\varpi_f$, $\varpi_g$ are nondecreasing on $[0,+\infty)$. Moreover, we assume that $\varpi_f$ is even. Further, denote by $L$ the Lipschitz constant for the function $f$, i.e., for any $t\in [0,T]$, $x,x'\in\td$, $m,m'\in\mathcal{P}^2(\td)$, $u\in U$, $v\in V$,
$$\|f(t,x,m,u,v)-f(t,x',m',u,v)\|\leq L\|x-x'\|+LW_2(m,m'). $$ 

Set \begin{equation}\label{intro:varpi_1}
\varpi_1(\varepsilon)\triangleq 2\sqrt{d}\varpi_f(\varepsilon)+4\sqrt{d}LC_0\varepsilon,
\end{equation}
\begin{equation}\label{intro:varpi_2}
\varpi_2(\varepsilon)\triangleq 2\varpi_1(\varepsilon)+4C_0^2\varepsilon.
\end{equation}
Let $\rho(\varepsilon,t)$ be equal to 
\begin{equation}\label{intro:varrho}
\varrho(\varepsilon,t)\triangleq (\varepsilon+\varpi_2(\varepsilon)t)e^{4Lt}. 
\end{equation}

Given $s\in [0,T]$, $x,y\in\td$, $m\in\mathcal{P}^2(\td)$, pick
$$ \hat{u}(s,x,y,m)\in\underset{u\in U}{\operatorname{argmin}}\max_{v\in V}\langle x'-y',f(s,x,m,u,v)\rangle, $$
$$ \hat{v}(s,x,y,m)\in \underset{v\in V}{\operatorname{argmax}}\min_{u\in U}\langle x'-y',f(s,x,m,u,v)\rangle. $$ Here $x'\in x$, $y'\in y$ are such that $\|x'-y'\|=\|x-y\|$. %%%% maybe add sime properties here.
Notice that one can choose the function $\hat{u}$ and $\hat{v}$ to be measurable.

The strategy $\mathfrak{u}_\varepsilon$ introduced below realizes the extremal shift rule initially proposed for finite dimensional differential games in~\cite{NN_PDG_en}. We adapt for the mean field case the variant of this method borrowed from~\cite{krasovskii_control}. Let $(s,m)$ be a position from $[0,T]\times\mathcal{P}^2(\td)$. Let $\nu\in\mathcal{P}^2(\td)$ be  such that $W_2^2(m,\nu)\leq\varrho(\varepsilon,s)$ and
$$\psi_1(s,\nu)=\min\{\psi_1(s,m'):m'\in\mathcal{P}^2(\td),\ \ W_2^2(m,m')\leq \varrho(\varepsilon,s)\}. $$ Now, let $\pi$ be an optimal plan between $m$ and $\nu$, $\pi(\cdot|x)$ be its disintegration with respect to $m$. Define the first player's strategy $\mathfrak{u}_\varepsilon $ by the rule: for $s\in [0,T]$, $m\in\mathcal{P}^2(\td)$, $x\in\td$, put
\begin{equation}\label{intro:strategy}
\mathfrak{u}_\varepsilon[s,m](x)\triangleq \hat{u}(s,x,\cdot,m)_\#\pi(\cdot|x).  
\end{equation} By construction we have that $\mathfrak{u}_\varepsilon[s,m]\in \mathcal{A}^c$.

Below we use the following unfolding of the solution of (\ref{sys:agent_gen}). If $t\in [0,T]$, $x\in\rd$, $m\in\mathcal{P}^2(\td)$, $u\in U$, $v\in V$, then put
$$\tilde{f}(t,x',m,u,v)\triangleq f(t,[x'],m,u,v). $$ %By construction, $\tilde{f}$ is bounded by $C_0$. 

Let $s\in [0,T]$, $y'\in\rd$, $m(\cdot)\in\mathcal{M}$, $\eta\in\mathcal{W}$. Denote by $\tilde{x}(\cdot,s,y',m(\cdot),\eta)$ the solution of initial value problem in $\rd$
\begin{equation}\label{sys:agent_gen_rd}
\frac{d}{dt}\tilde{x}(t)=\int_{U\times V}\tilde{f}(t,\tilde{x}(t),m(t),u,v)\eta(d(u,v)),\ \ \tilde{x}(s)=y'.
\end{equation}   Notice that if $\tilde{x}(\cdot)$ solves (\ref{sys:agent_gen_rd}), then $x(\cdot)$ given by $x(t)\triangleq [\tilde{x}(t)]$ solves (\ref{sys:agent_gen}) for $y=[y']$. Furthermore, the definition of $\tilde{f}$ and (\ref{intro:C_0}) yield
\begin{equation}\label{ineq:lip_trajectory}
\|\tilde{x}(t,s,y',m(\cdot),\eta)-y'\|\leq C_0(t-s).
\end{equation}

%This implies that if $\nu\in\mathcal{P}^2(\td)$, $\varkappa\in\mathcal{D}$,
%\begin{equation}\label{ineq:lip_flow}
%W_2(m(t,s,\nu,\varkappa),\nu)\leq C_0(t-s).
%\end{equation}

\begin{lemma}\label{lm:agent_motion}
	Let $s,r\in [0,T]$, $s\leq r$, $x_*,y_*\in\td$, $m(\cdot),\nu(\cdot)\in \mathcal{M}^{C_0}$, ${u}_*=\hat{u}(s,x_*,y_*,m(s))$, $v^*=\hat{v}(s,x_*,y_*,m(s))$, $\xi\in\mathcal{U}$, $\zeta\in\mathcal{V}$,  $x(\cdot)=x(\cdot,s,x_*,m(\cdot),\delta_{u_*}\zeta)$, $y(\cdot)=x(\cdot,s,y_*,\nu(\cdot),\xi\delta_{v^*})$.  Then 
	\begin{equation*}
	\begin{split}
	\|x(r)-y(r)\|^2\leq \|x_*&-y_*\|^2(1+3L(r-s))\\&+LW_2^2(m(s),\nu(s))(r-s)+\varpi_2(r-s)\cdot (r-s).
	\end{split}
	\end{equation*}
	Here $\varpi_2$ is introduced by (\ref{intro:varpi_2}).
\end{lemma}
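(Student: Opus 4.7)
The plan is to lift to $\rd$: pick representatives $x_*', y_*' \in \rd$ with $\|x_*'-y_*'\| = \|x_*-y_*\|$, work with $\rd$-solutions $\tilde x(\cdot), \tilde y(\cdot)$ of~(\ref{sys:agent_gen_rd}) starting from $x_*', y_*'$, and bound $\phi(t) \triangleq \|\tilde x(t) - \tilde y(t)\|^2$; since $\|x(r)-y(r)\| \leq \|\tilde x(r) - \tilde y(r)\|$ and $\phi(s) = \|x_*-y_*\|^2$, this delivers the lemma. Differentiating, $\phi'(t) = 2\langle \tilde x(t)-\tilde y(t), \dot{\tilde x}(t) - \dot{\tilde y}(t)\rangle$. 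The central step is to insert three reference drifts (all evaluated in the relaxed time-$t$ integrators) and split $\dot{\tilde x}(t)-\dot{\tilde y}(t)$ into four pieces: an $x$-anchor residual comparing $\dot{\tilde x}(t)$ to a drift with argument $(s, x_*, m(s), u_*, \cdot)$; a $y$-anchor residual comparing $\dot{\tilde y}(t)$ to one with argument $(s, y_*, \nu(s), \cdot, v^*)$; a Lipschitz jump from the $y$-anchor to a saddle mate $(s, x_*, m(s), \cdot, v^*)$; and the saddle pair itself.

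The two residuals have norms $\leq \varpi_f(t-s) + 2LC_0(t-s)$ by continuity and Lipschitzness of $f$ and the speed bound~(\ref{ineq:lip_trajectory}), so dotting with $\tilde x(t)-\tilde y(t)$ (controlled via $\operatorname{diam}(\td) + 2C_0(t-s)$) yields contributions $\leq \varpi_1(t-s)$ apiece. For the Lipschitz jump, of norm $\leq L\|x_*-y_*\| + LW_2(m(s), \nu(s))$, I would split $\tilde x(t)-\tilde y(t) = (x_*'-y_*') + [(\tilde x(t)-x_*')-(\tilde y(t)-y_*')]$; the main piece, via Cauchy--Schwarz and Young's inequality $2ab \leq a^2 + b^2$ applied to $2\|x_*-y_*\|(L\|x_*-y_*\| + LW_2(m(s),\nu(s)))$, gives $\leq 3L\|x_*-y_*\|^2 + LW_2^2(m(s), \nu(s))$ — exactly the two non-error terms of the lemma after integration — while the error piece is absorbed into a $\varpi_1(t-s)$ budget. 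For the saddle pair, the extremal choices of $u_* = \hat u(s, x_*, y_*, m(s))$ and $v^* = \hat v(s, x_*, y_*, m(s))$ together with the Isaacs condition yield $\langle x_*'-y_*', f(s, x_*, m(s), u_*, v)\rangle \leq \langle x_*'-y_*', f(s, x_*, m(s), u, v^*)\rangle$ for all $u, v$, so integration against the relaxed controls makes the main part of this piece nonpositive; the inner-product discrepancy from using $\tilde x(t)-\tilde y(t)$ instead of $x_*'-y_*'$ contributes at most $8C_0^2(t-s)$.

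Integrating $\phi'(t)$ over $[s, r]$ gives $\phi(r) \leq \|x_*-y_*\|^2 + 3L(r-s)\|x_*-y_*\|^2 + L(r-s)W_2^2(m(s), \nu(s)) + 2\varpi_1(r-s)(r-s) + 4C_0^2(r-s)^2$, and the last two summands combine by definition into $\varpi_2(r-s)(r-s)$. The main obstacle I anticipate is the coefficient bookkeeping in the Lipschitz-jump step — ensuring that the cross-error terms from the $O(C_0(t-s))$ discrepancy between $\tilde x(t) - \tilde y(t)$ and $x_*'-y_*'$ stay within the $\varpi_1$ budget; the explicit $\sqrt{d}$ factor in the definition of $\varpi_1$ (coming from the diameter of $\td$) is precisely calibrated for this absorption.
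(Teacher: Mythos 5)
Your proposal is correct and follows essentially the same route as the paper: lift to $\rd$, freeze the dynamics at $(s,x_*,m(s))$ and $(s,y_*,\nu(s))$ at cost $\varpi_1$, absorb the Lipschitz jump via Young's inequality into the $\tfrac{3L}{2}\|x_*-y_*\|^2+\tfrac{L}{2}W_2^2$ terms, and kill the frozen saddle pair using the extremal choice of $u_*,v^*$ together with the Isaacs condition. The only cosmetic difference is that you differentiate $\|\tilde x(t)-\tilde y(t)\|^2$ and integrate, whereas the paper expands $\|\tilde x(r)-\tilde y(r)\|^2$ algebraically at the endpoint (isolating $2\langle x_*'-y_*',\tilde x(r)-x_*'\rangle-2\langle x_*'-y_*',\tilde y(r)-y_*'\rangle$ plus a $4C_0^2(r-s)^2$ remainder), which sidesteps the replacement of $\tilde x(t)-\tilde y(t)$ by $x_*'-y_*'$ that your bookkeeping has to track term by term.
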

\begin{proof} Pick $x_*'\in x_*$ and $y_*'\in y_*$ such that
	$$\|x_*-y_*\|=\|x_*'-y_*'\| $$ and
	$$ \hat{u}(s,x_*,y_*,m)\in\underset{u\in U}{\operatorname{argmin}}\max_{v\in V}\langle x'_*-y'_*,f(s,x_*,m,u,v)\rangle, $$
	$$ \hat{v}(s,x_*,y_*,m)\in \underset{v\in V}{\operatorname{argmax}}\min_{u\in U}\langle x_*'-y'_*,f(s,x_*,m,u,v)\rangle. $$
	Denote
	$$\tilde{x}(t)\triangleq \tilde{x}(t,s,x_*',m(\cdot),\delta_{u_*}\zeta),\ \ \tilde{y}(t)\triangleq \tilde{x}(t,s,y_*',\nu(\cdot),\xi\delta_{v^*}). $$
	Using (\ref{ineq:lip_trajectory}), we get
	\begin{equation}\label{ineq:x_r_y_r_firts}
	\begin{split}
	\|x(r)-y&(r)\|^2\leq \|\tilde{x}(r)-\tilde{y}(r)\|^2 \\\leq \|x_*'&-y_*'\|^2+\|\tilde{x}(r)-x_*'\|^2+\|\tilde{y}(r)-y_*'\|^2\\&- 2\langle \tilde{x}(r)-x_*',\tilde{y}(r)-y_*'\rangle\\
	&+
	2\langle x_*'-y_*', \tilde{x}(r)-x_*'\rangle-2\langle x_*'-y_*',\tilde{y}(r)-y_*'\rangle\\ \leq \|x_*'&-y_*'\|^2+4C_0^2(r-s)^2\\
	&+
	2\langle x_*'-y_*', \tilde{x}(r)-x_*\rangle-2\langle x_*'-y_*',\tilde{y}(r)-y_*'\rangle.
	\end{split}
	\end{equation}
	
	Using  Lipschitz continuity of the function $f$ w.r.t. $x$ and $m$, the definition of $\tilde{f}$,  estimate (\ref{ineq:lip_trajectory}),  the inequality $\|x_*'-y_*'\|\leq\sqrt{d}$ and the fact that $m(\cdot),\nu(\cdot)\in\mathcal{M}^{C_0}$, we conclude that
	\begin{equation}\label{ineq:inner_product_estimate}
	\begin{split}
	\langle x_*'-y_*', \tilde{x}(r)&-x_*'\rangle-\langle x_*'-y_*',\tilde{y}(r)-y_*'\rangle\\=
	\Bigl\langle x_*'-y_*'&, \int_s^r\int_V \tilde{f}(t,\tilde{x}(t),m(t),u_*,v)\zeta(t,dv)dt\Bigr\rangle\\-
	\Bigl\langle x_*'&-y_*', \int_s^r\int_U \tilde{f}(t,\tilde{y}(t),\nu(t),u,v^*)\xi(t,du)dt\Bigr\rangle\\\leq
	\int_s^r\int_V\langle x&{}_*'-y_*',f(s,x_*,m(s),u_*,v)\zeta(t,dv)\rangle dt\\-
	\int_s^r&\int_U\langle x_*'-y_*',f(s,y_*,\nu(s),u,v^*)\xi(t,du)\rangle dt\\
	+\varpi_1(&r-s)\cdot (r-s).
	\end{split}
	\end{equation}
	Here $\varpi_1$ is defined by (\ref{intro:varpi_1})

	For each  $u\in U$, $v\in V$, the following inequality holds:
	\begin{multline*}
	\langle x_*'-y_*',f(s,x_*,m(s),u_*,v)\rangle-\langle x_*'-y_*',f(s,y_*,\nu(s),u,v^*)\rangle \\ \leq
	\langle x_*'-y_*',f(s,x_*,m(s),u_*,v)\rangle-\langle x_*'-y_*',f(s,x_*,m(s),u,v^*)\rangle\\+\frac{3L}{2}\|x_*-y_*\|^2+ \frac{L}{2}W_2^2(m(s),\nu(s)).
	\end{multline*}
	Using the choice of $u_*$ and $v^*$, we get
	\begin{multline*}
	\langle x_*'-y_*',f(s,x_*,m(s),u_*,v)\rangle-\langle x_*'-y_*',f(s,y_*,\nu(s),u,v^*)\rangle \\ \leq \frac{3L}{2}\|x_*-y_*\|^2+ \frac{L}{2}W_2^2(m(s),\nu(s)).
	\end{multline*}
	Combining this, (\ref{ineq:x_r_y_r_firts}), (\ref{ineq:inner_product_estimate}) and definition of $\varpi_2$ (see (\ref{intro:varpi_2})), we get the conclusion of the Lemma. 
\end{proof}
\begin{lemma}\label{lm:flow_motion} Let $s,r\in [0,T]$, $s\leq r$, $m_*,\nu_*\in\mathcal{P}^2(\td)$, $\pi$ be an optimal plan between $m_*$ and $\nu_*$, $\pi(\cdot|x)$, $\pi(\cdot|y)$ be its disintegration with respect to $m_*$ and $\nu_*$ respectively, $\alpha^*(x)\triangleq \hat{u}(s,x,\cdot,m_*)_\#\pi(\cdot|x)$, $\beta^*(y)\triangleq \hat{v}(s,\cdot,y,m_*)_\#\pi(\cdot|y)$, $\varkappa\in\mathcal{D}_1[\alpha^*]$, $\vartheta\in\mathcal{D}_2[\beta^*]$, $m(\cdot)=m(\cdot,s,m_*,\varkappa)$, $\nu(\cdot)=m(\cdot,s,\nu_*,\vartheta)$. Then
	$$W_2^2(m(r),\nu(r))\leq W_2^2(m_*,\nu_*)(1+4L(r-s))+\varpi_2(r-s)\cdot (r-s). $$
\end{lemma}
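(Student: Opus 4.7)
The plan is to build a coupling between $m(r)$ and $\nu(r)$ via paired agent trajectories whose initial points are sampled from the optimal plan $\pi$, then apply Lemma \ref{lm:agent_motion} pointwise to each pair and integrate.

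First I would disintegrate the joint controls: write
\[
\varkappa(x_*,d(u,v))=\alpha^*(x_*,du)\,\kappa(x_*,u,dv),\qquad \vartheta(y_*,d(u,v))=\beta^*(y_*,dv)\,\kappa'(y_*,v,du),
\]
for measurable kernels $\kappa,\kappa'$. On a background probability space, sample $(x_*,y_*)\sim\pi$, set $u_*:=\hat{u}(s,x_*,y_*,m_*)$ and $v^*:=\hat{v}(s,x_*,y_*,m_*)$, and, conditionally on $(x_*,y_*)$, independently draw $\zeta\sim\kappa(x_*,u_*,\cdot)$ and $\xi\sim\kappa'(y_*,v^*,\cdot)$. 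By the defining identities $\alpha^*(x_*)=\hat{u}(s,x_*,\cdot,m_*)_\#\pi(\cdot|x_*)$ and $\beta^*(y_*)=\hat{v}(s,\cdot,y_*,m_*)_\#\pi(\cdot|y_*)$, the conditional law of $u_*$ given $x_*$ is exactly $\alpha^*(x_*)$, so the conditional law of $(u_*,\zeta)$ given $x_*$ equals $\varkappa(x_*,\cdot)$; symmetrically, the conditional law of $(\xi,v^*)$ given $y_*$ equals $\vartheta(y_*,\cdot)$. Consequently the joint laws of $(x_*,\delta_{u_*}\zeta)$ and $(y_*,\xi\delta_{v^*})$ reproduce $m_*\star\varkappa$ and $\nu_*\star\vartheta$ respectively.

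Next, define paired trajectories $x(t):=x(t,s,x_*,m(\cdot),\delta_{u_*}\zeta)$ and $y(t):=x(t,s,y_*,\nu(\cdot),\xi\delta_{v^*})$. By Definition \ref{def:motion} together with the previous paragraph, the law of $x(r)$ is $m(r)$ and the law of $y(r)$ is $\nu(r)$, so the joint law of $(x(r),y(r))$ belongs to $\Pi(m(r),\nu(r))$. Since both $m(\cdot)$ and $\nu(\cdot)$ lie in $\mathcal{M}^{C_0}$, Lemma \ref{lm:agent_motion} applies pointwise (with $m(s)=m_*$, $\nu(s)=\nu_*$) and yields
\[
\|x(r)-y(r)\|^2\leq \|x_*-y_*\|^2(1+3L(r-s))+LW_2^2(m_*,\nu_*)(r-s)+\varpi_2(r-s)(r-s).
\]
Taking expectation against the constructed joint law, using $\mathbb{E}\|x_*-y_*\|^2=W_2^2(m_*,\nu_*)$ by optimality of $\pi$, and combining the coefficients $3L+L=4L$ gives the claimed inequality.

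The main obstacle is the marginal-preservation check: verifying that the coupled trajectories actually realize the flows $m(\cdot)$ and $\nu(\cdot)$ as specified by Definition \ref{def:motion}. This reduces to the disintegration argument above, whose validity rests on the compatibility conditions $\varkappa\in\mathcal{D}_1[\alpha^*]$ and $\vartheta\in\mathcal{D}_2[\beta^*]$ together with the pushforward characterizations of $\alpha^*$ and $\beta^*$. Once this bookkeeping is done, the remainder is a straightforward integration of the pointwise bound from Lemma \ref{lm:agent_motion}.
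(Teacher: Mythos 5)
Your proposal is correct and follows essentially the same route as the paper's proof: disintegrate $\varkappa$ and $\vartheta$ with respect to $\alpha^*$ and $\beta^*$, couple the initial points through the optimal plan $\pi$ and the extremal controls $\hat{u},\hat{v}$ (your sampling construction is exactly the paper's measure $\Xi$ on $\td\times\td\times\mathcal{U}\times\mathcal{V}\times\mathcal{U}\times\mathcal{V}$, pushed forward by the trajectory map to a plan between $m(r)$ and $\nu(r)$), then apply Lemma \ref{lm:agent_motion} pointwise and integrate, using optimality of $\pi$ to combine $3L+L=4L$. The only difference is presentational — probabilistic sampling language versus explicit pushforward of measures — and your identification of where the real work lies (the marginal-preservation check via the compatibility conditions) matches the paper.
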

\begin{proof} % First, notice that $m(\cdot), \nu(\cdot)\in\mathcal{M}^{C_0}$.
	
	Since $\alpha^*\in\mathcal{A}^c=\mathrm{WM}(\td,U)$, $\varkappa\in \mathcal{D}_1[\alpha^*]$, we have that, for each $x\in \td$, there exists a disintegration of $\varkappa(x)$ with respect to $\alpha^*(x)$ that is an element of $\mathrm{WM}( U,\mathcal{V})$. With some abuse of notation denote it by  $\varkappa(x,u)$. Analogously, let $\vartheta(x,v)$ stand for the disintegration of $\vartheta(x)$ with respect to $\beta^*(x)$.
	
	Further, set
	\begin{equation}\label{intro:hat_varkappa}
	\hat{\varkappa}(x_*,y_*)\triangleq\varkappa(x_*,\hat{u}(s,x_*,y_*,m_*)),
	\end{equation}
	\begin{equation}\label{intro:hat_vartheta}
	\hat{\vartheta}(x_*,y_*)\triangleq\vartheta(x_*,\hat{v}(s,x_*,y_*,m_*)).
	\end{equation} Notice that, for each $x_*,y_*\in\td$, $\hat{\varkappa}(x_*,y_*)\in \mathcal{P}(\mathcal{V})$, $\hat{\vartheta}(x_*,y_*)\in\mathcal{P}(\mathcal{U})$.
	
	Let $\Xi$ be a probability on $\td\times\td\times\mathcal{U}\times\mathcal{V}\times\mathcal{U}\times\mathcal{V}$ defined by the following rule: for $\phi\in C_b(\td\times\td\times\mathcal{U}\times\mathcal{V}\times\mathcal{U}\times\mathcal{V})$,
	\begin{equation}\label{intro:Prob_Xi}
	\begin{split}
	&\int_{\td\times\td\times\mathcal{U}\times\mathcal{V}\times\mathcal{U}\times\mathcal{V}}\phi(x,y,\xi',\zeta',\xi'',\zeta'')\Xi(d(x,y,\xi',\zeta',\xi'',\zeta''))\\ &{}\hspace{10pt}\triangleq
	\int_{\td\times\td}\int_{\mathcal{U}}\int_{\mathcal{V}}\phi(x_*,y_*,\delta_{\hat{u}(s,x_*,y_*,m_*)},\zeta',\xi'',\delta_{\hat{v}(s,x_*,y_*,m_*)})\\
	&{}\hspace{115pt}\hat{\varkappa}(x_*,y_*,d\zeta')\hat{\vartheta}(x_*,y_*,d\xi'')\pi(d(x_*,y_*)).
	\end{split}
	\end{equation} Obviously, the marginal distribution of $\Xi$ on $\td\times\td$ is $\pi$.
	
	To clarify the link between the probability $\Xi$ and the  distributions of controls $\varkappa$ and $\vartheta$ let us introduce the following projection of $\td\times\td\times\mathcal{U}\times\mathcal{V}\times\mathcal{U}\times\mathcal{V}$ on $\td\times \mathcal{U}\times\mathcal{V}$:
	\begin{itemize}
		\item $\mathrm{P}^{1}(x_*,y_*,\xi',\zeta',\xi'',\zeta'')\triangleq (x_*,\xi',\zeta')$;
		\item $\mathrm{P}^{2}(x_*,y_*,\xi',\zeta',\xi'',\zeta'')\triangleq (y_*,\xi'',\zeta'')$.
	\end{itemize} We have that
	\begin{equation}\label{equality:projections}
	\mathrm{P}^1{}_\#\Xi=m_*\star \varkappa,\ \ \mathrm{P}^2{}_\#\Xi=\nu_*\star \vartheta. 
	\end{equation}
	
	Let $\mathcal{T}^{r,s}_{m(\cdot),\nu(\cdot)}$ be the operator from $\td\times \td\times \mathcal{U}\times\mathcal{V}\times\mathcal{U}\times\mathcal{V}$ to $\td\times\td$ defined by:
	\begin{equation}\label{intro:T}
	\begin{split}
	\mathcal{T}^{r,s}_{m(\cdot),\nu(\cdot)}&(x',x'',\xi',\zeta',\xi'',\zeta'')\\&\triangleq (e_r(\mathrm{traj}^{s}_{m(\cdot)}(x',\xi'\zeta')), e_r(\mathrm{traj}^{s}_{\nu(\cdot)}(x'',\xi''\zeta''))).  
	\end{split}
	\end{equation}
	
	This, (\ref{equality:projections}) and Definition \ref{def:motion} yield that
	$$\hat{\pi}\triangleq \mathcal{T}^{r,s}_{m(\cdot),\nu(\cdot)}{}_\# \Xi $$ is a plan between $m(r)$ and $\nu(r)$. To simplify notation put
	\begin{equation}\label{intro:hat_x}
	\hat{x}(x_*,y_*,\zeta)\triangleq x(r,s,x_*,m(\cdot),\delta_{\hat{u}(s,x_*,y_*,m_*)}\zeta), 
	\end{equation}
	\begin{equation}\label{intro:hat_y}
	\hat{y}(x_*,y_*,\xi)\triangleq x(r,s,y_*,\nu(\cdot),\xi\delta_{\hat{v}(s,x_*,y_*,m_*)}).
	\end{equation}
	
	Using (\ref{intro:Prob_Xi}), (\ref{intro:T}) and definition of $\hat{\pi}$, we get
	\begin{equation*}
	\begin{split}
	W_2^2(m(r),\nu(r))
	\leq \int_{\td\times\td}&\|x-y\|^2\hat{\pi}(d(x,y))\\=
	\int_{\td\times\td\times\mathcal{U}\times\mathcal{V}\times\mathcal{U}\times\mathcal{V}}&\|x(r,s,x_*,m(\cdot),\xi'\zeta')-x(r,s,y_*,\nu(\cdot),\xi''\zeta'')\|^2 \\ 
	&\Xi(d(x_*,y_*,\xi',\zeta',\xi'',\zeta''))
	\\=
	\int_{\td\times\td}\int_{\mathcal{U}}\int_{\mathcal{V}}
	\|\hat{x}&(x_*,y_*,\zeta)-\hat{y}(x_*,y_*,\xi)\|^2\\ &\hat{\varkappa}(x_*,y_*,d\zeta)\hat{\vartheta}(x_*,y_*,d\xi)\pi(d(x_*,y_*)).
	\end{split}
	\end{equation*} Since $m(\cdot),\nu(\cdot)\in\mathcal{M}^{C_0}$, taking into account designations (\ref{intro:hat_x}), (\ref{intro:hat_y}), one can estimate $\|\hat{x}(x_*,y_*,\zeta)-\hat{y}(x_*,y_*,\xi)\|^2$ by Lemma \ref{lm:agent_motion}. This implies the inequality
	\begin{equation*}\begin{split}
	W_2^2(m(r),\nu(r))\leq (1+&3L(r-s))\int_{\td\times\td}\|x_*-y_*\|^2\pi(d(x_*,y_*))\\&+LW_2^2(m(s),\nu(s))(r-s)+\varpi_2(r-s)\cdot (r-s).
	\end{split}
	\end{equation*} Since $\pi$ is an optimal plan between $m_*$ and $\nu_*$ we obtain the conclusion of the Lemma. 
\end{proof}

\begin{proof}[Proof of Theorem   \ref{th:extremal}]
Let $t_0\in [0,T]$, $m_0\in\mathcal{P}^2(\td)$, $\Delta=\{t_i\}_{i=0}^N$ be a partition of $[t_0,T]$. Assume that $d(\Delta)\leq\varepsilon$. As usual, $d(\Delta)$ stands for the fineness of $\Delta$.

Let $m(\cdot)\in \mathcal{X}_1(t_0,m_0,\mathfrak{u}_\varepsilon,\Delta)$. Denote $m_i\triangleq m(t_i)$. Recall (see (\ref{intro:strategy})) that the strategy $\mathfrak{u}_\varepsilon[t_i,m_i]$ is determined by the rule: if $x\in\td$,
$$\mathfrak{u}_\varepsilon[t_i,m_i](x)=\alpha_i^*(x,\cdot)=\hat{u}(t_i,x,\cdot,m_i)_\#\pi_i(\cdot|x). $$ Here $\pi_i$ is an optimal plan between $m_i$ and $\nu_i$; while $\nu_i$ is a probability on $\mathcal{P}^2(\td)$ such that 
$$\nu_i\in\underset{m:W_2^2(m_i,m)\leq \varrho(\varepsilon,t_i)}{\operatorname{argmin}}\psi_1(t_i,m). $$   Since $m(\cdot)\in \mathcal{X}_1(t_0,m_0,\mathfrak{u}_\varepsilon,\Delta)$, there exists a sequence of distributions $\varkappa_i\in \mathcal{D}$, $i=0,\ldots,N-1$ such that $\varkappa_i\in\mathcal{D}_1^0[\alpha_i^*]$ and, for $t\in [t_i,t_{i+1}]$,
$$m(t)=m(t,t_i,m_i,\varkappa_i). $$
Further, for $y\in\td$, set
$$\beta^*_i(y,\cdot)=\hat{v}(t_i,\cdot,y,m_i)_\#\pi_i(\cdot|y).  $$ We have that there exists a distribution of controls $\vartheta_i\in\mathcal{D}_2[\beta_i^*]$ such that, for $\nu_i(\cdot)\triangleq m(\cdot,t_i,\nu_i,\vartheta_i)$,
\begin{equation}\label{ineq:u_stab_i}
\psi_1(t_i,\nu_i)\geq \psi_1(t_{i+1},\nu(t_{i+1})).
\end{equation} Using Lemma \ref{lm:flow_motion}, we get 
\begin{equation}\label{ineq:distance_m_nu}
W_2^2(m(t_{i+1}),\nu_i(t_{i+1}))\leq \varrho(\varepsilon,t_{i+1}). 
\end{equation}
From (\ref{ineq:u_stab_i}) and Definition \ref{def:u_stable} we conclude  that
\begin{equation*}\begin{split}
\psi_1(t_0,m_0)\geq \psi_1(t_0,\nu_0)&\geq \psi_1(t_1,\nu_0(t_1))\geq \psi_1(t_1,\nu_1)\\ &\geq\ldots\geq\psi_1(t_N,\nu_{N-1}(t_N))\geq g(\nu_{N-1}(t_N)). \end{split}
\end{equation*}

Using  (\ref{ineq:distance_m_nu}) and the definition of $\varrho$ (see (\ref{intro:varrho})), we obtain the following estimate: 
\begin{equation*}\begin{split}g(m(T))\leq g(\nu_{N-1}(T))+\varpi_g(&W_2(m(T),\nu_{N-1}(T)))\\ &\leq
\psi_1(t_0,m_0)+\varpi_g(\varrho^{1/2}(\varepsilon,T)).
\end{split}\end{equation*}
Thus, for any partition $\Delta$ such that $d(\Delta)\leq\varepsilon$,
$$J_1(t_0,m_0,\mathfrak{u}_\varepsilon,\Delta)\leq\psi_1(t_0,m_0)+\varpi_g(\varrho^{1/2}(\varepsilon,T)). $$
Since $\varpi_g(\varrho^{1/2}(\varepsilon,T))\rightarrow 0$ as $\varepsilon\rightarrow 0$, we get 
$$\Gamma_1(t_0,m_0)\leq \psi_1(t_0,m_0). $$
This proves the first statement of the theorem. To prove the second part it suffices to replace $g$ with $-g$ and interchange the players.
\end{proof}

\section{Programmed iteration method}\label{sect:pim}
This section is concerned with the proof of Theorem \ref{th:existence}.
First, let us prove the following auxiliary statement. 
\begin{lemma}\label{lm:cont_dependence}
	Let $\{s_i\}\subset [0,T]$, $\{\mu_i\}\in\mathcal{P}^2(\td)$, $s\in [0,T]$, $\mu\in \mathcal{P}^2(\td)$, $\beta\in \mathcal{B}^c$. Assume that $s_i\rightarrow s$, $W_2(\mu_i,\mu)\rightarrow 0$ as $i\rightarrow \infty$. Then there exists a sequence $\{\beta_i\}_{i=1}^\infty\subset \mathcal{B}^c$ satisfying the following properties:
	\begin{enumerate}
		\item $\{\mu_i\star\beta_i\}$  converges narrowly to $\mu\star\beta$;
		\item if $\varkappa_i\in \mathcal{D}_2[\beta_i]$, then there exist $\varkappa\in\mathcal{D}_2[\beta]$ and a subsequence $\{i_l\}$ such that, for $m_{i_l}(\cdot)\triangleq m(\cdot,s_{i_l},\mu_{i_l},\varkappa_{i_l})$ and $m(\cdot)\triangleq m(\cdot,s,\mu,\varkappa)$,
		$$\lim_{l\rightarrow\infty}\sup_{t\in [0,T]}W_2(m_{i_l}(t),m(t))=0. $$
	\end{enumerate} Moreover, if $\mu_i=\mu$, then one can choose $\beta_i=\beta$.
\end{lemma}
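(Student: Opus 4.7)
The plan is to couple $\mu_i$ and $\mu$ by optimal transport and use this coupling to transfer $\beta$ to a distribution compatible with $\mu_i$. Let $\pi_i\in\Pi(\mu_i,\mu)$ be an optimal plan, so that $\int_{\td\times\td}\|x-y\|^2\pi_i(d(x,y))=W_2^2(\mu_i,\mu)\to 0$. I would define a probability $\sigma_i$ on $\td\times V$ by
$$\int_{\td\times V}\phi(x,v)\,\sigma_i(d(x,v))\triangleq\int_{\td\times\td}\int_V\phi(x,v)\beta(y,dv)\pi_i(d(x,y))$$
for $\phi\in C_b(\td\times V)$. The $\td$-marginal of $\sigma_i$ is $\mu_i$, so disintegration produces $\beta_i\in\mathrm{WM}(\td,V)=\mathcal{B}^c$ with $\sigma_i=\mu_i\star\beta_i$. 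If $\mu_i=\mu$, taking $\pi_i$ concentrated on the diagonal recovers $\beta_i=\beta$.

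To verify property (1), I would exploit that the second $\td$-marginal of $\pi_i$ equals $\mu$. For $\phi\in C_b(\td\times V)$ one obtains
$$\int\phi\,d(\mu_i\star\beta_i)-\int\phi\,d(\mu\star\beta)=\int_{\td\times\td}\int_V[\phi(x,v)-\phi(y,v)]\beta(y,dv)\pi_i(d(x,y)),$$
and the right-hand side tends to zero by uniform continuity of $\phi$ on the compact set $\td\times V$ combined with $\int\|x-y\|^2\pi_i\to 0$.

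For property (2), given $\varkappa_i\in\mathcal{D}_2[\beta_i]$, I would treat $\mu_i\star\varkappa_i$ as a probability on the compact space $\td\times\mathcal{U}\times\mathcal{V}$ (compactness of the Young-measure spaces $\mathcal{U}$ and $\mathcal{V}$ is standard since $U,V,[0,T]$ are compact). Extracting a narrowly convergent subsequence $\mu_{i_l}\star\varkappa_{i_l}\to\hat\sigma$, whose $\td$-marginal is forced to be $\mu$, disintegration yields $\varkappa\in\mathrm{WM}(\td,\mathcal{U}\times\mathcal{V})$ with $\hat\sigma=\mu\star\varkappa$; projecting onto $\td\times\mathcal{V}$ and invoking property (1) together with the embedding $V\subset\mathcal{V}$ from (\ref{inclusion:control_spaces}) identifies the $\mathcal{V}$-marginal of $\varkappa(x)$ with $\beta(x)$, placing $\varkappa\in\mathcal{D}_2[\beta]$. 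By (\ref{intro:C_0}) every trajectory is $C_0$-Lipschitz, so the measures $\gamma_i\triangleq\mathrm{traj}^{s_i}_{m_i(\cdot)}{}_\#(\mu_i\star\varkappa_i)$ are concentrated on a single compact subset of $\mathcal{C}$; passing to a further subsequence gives $\gamma_{i_l}\to\gamma$ narrowly. Setting $m(t)\triangleq e_t{}_\#\gamma$, equicontinuity of $\{m_i(\cdot)\}\subset\mathcal{M}^{C_0}$ together with pointwise $W_2$-convergence $m_{i_l}(t)\to m(t)$ (valid because $W_2$ metrizes narrow convergence on $\mathcal{P}^2(\td)$) yields uniform $W_2$-convergence on $[0,T]$.

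The main obstacle is identifying $m(\cdot)$ as the flow $m(\cdot,s,\mu,\varkappa)$ from Definition \ref{def:motion}, i.e., $\gamma=\mathrm{traj}^{s}_{m(\cdot)}{}_\#(\mu\star\varkappa)$. For this I would establish joint continuity of the trajectory operator: a Gronwall argument based on Lipschitz continuity of $f$ in $(x,m)$, the modulus $\varpi_f$ in $t$, and continuity of $\eta\mapsto\int f\,d\eta$ on $\mathcal{W}$ (since $f$ is continuous and bounded) implies that $\mathrm{traj}^{s_{i_l}}_{m_{i_l}(\cdot)}\to\mathrm{traj}^s_{m(\cdot)}$ uniformly on $\td\times\mathcal{W}$. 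Together with the narrow convergence $\mu_{i_l}\star\varkappa_{i_l}\to\mu\star\varkappa$, this forces $\gamma=\mathrm{traj}^s_{m(\cdot)}{}_\#(\mu\star\varkappa)$ by uniqueness of the narrow limit, completing the proof.
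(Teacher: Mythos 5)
Your proposal follows essentially the same route as the paper's own proof: you transfer $\beta$ to $\beta_i$ through the disintegration of an optimal plan between $\mu$ and $\mu_i$, extract a narrowly convergent subsequence of $\mu_i\star\varkappa_i$ on a compact product space to produce $\varkappa\in\mathcal{D}_2[\beta]$ by disintegration, and identify the limit flow by combining tightness of the trajectory push-forwards with uniform convergence of the trajectory operators. The argument is correct; the only cosmetic differences are that the paper tests narrow convergence against $1$-Lipschitz functions and deduces uniform $W_2$-convergence of the flows from inequality (\ref{ineq:wass_motions}) rather than from equicontinuity.
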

\begin{proof} First, let us construct $\beta_i$. Let $\pi_i\in \Pi(\mu,\mu_i)$ be an optimal plan between $\mu$ and $\mu_i$. Further, let $\pi_i(\cdot|x')$ be its disintegration with respect to $\mu_i$. Define $\beta_i$ by the following rule: for $x'\in\td$ and $\phi\in C_b(V)$, put
	\begin{equation}\label{intro:beta_i}
	\int_V \phi(v)\beta_i(x',dv)\triangleq \int_{\td}\int_V\phi(v)\beta(x,dv)\pi_i(dx|x'). 
	\end{equation}
	
	Let us prove that $\mu_i\star\beta_i$ converges narrowly to $\mu\star\beta$. It suffices to prove that if $\phi\in C_b(\td\times V)$ is 1-Lipschitz continuous, then
	$$\int_{\td\times V}\phi(x',v)(\mu_i\star\beta_i)(d(x',v))\rightarrow \int_{\td\times V}\phi(x,v)(\mu\star \beta)(d(x,v)). $$ Since $\pi_i$ is an optimal plan between $\mu$ and $\mu_i$, by (\ref{intro:beta_i}) we have that
	\begin{equation}
	\begin{split}
	\Bigl| \int_{\td\times V}&\phi(x',v)(\mu_i\star\beta_i)(d(x',v))-\int_{\td\times V}\phi(x,v)(\mu\star \beta)(d(x,v))\Bigr| \\ &=\Bigl|
	\int_{\td\times\td}\int_V \phi(x',v)\beta(x,dv)\pi_i(d(x,x'))\\ &{}\hspace{40pt} -\int_{\td\times\td}\int_V \phi(x,v)\beta(x,dv)\pi_i(d(x,x'))
	\Bigr| \\ &\leq
	\int_{\td\times\td}\int_V|\phi(x',v)-\phi(x,v)|\beta(x,dv)\pi_i(d(x,x'))\\ &\leq
	\int_{\td\times\td}\|x-x'\|\pi_i(d(x,x'))\leq W_2(\mu,\mu_i).
	\end{split}
	\end{equation} This proves the first statement of the lemma. By construction we have that $\beta_i=\beta$ when $\mu_i=\mu$. 
	
	Now, let $\varkappa_i\in\mathcal{D}_2[\beta_i]$. We have that $\tau_i\triangleq \mu_i\star\varkappa_i$ is a probability on compact set $\td\times\mathcal{U}\times V$. Thus, there exist a subsequence $\{i_l\}$ and a probability $\tau\in\mathcal{P}(\td\times\mathcal{U}\times V)$ such that $\{\tau_{i_l}\}$ converges narrowly to $\tau$. Since the marginal distribution of $\tau_i$ on $\td\times V$ is $\mu_i\star\beta_i$ and $\mu_i\star\beta_i\rightarrow \mu\star\beta$ we conclude  that the marginal distribution of $\tau$ on $\td\times V$ is $\mu\star\beta$. Put $\varkappa$ equal to disintegration of $\tau$ with respect to $\mu$. Without loss of generality one can assume that $\varkappa\in\mathcal{D}_2[\beta]$. 
	
	Recall that
	$$m_i(\cdot)\triangleq m(\cdot,s_i,\mu_i,\varkappa_i). $$ 
	
	Set
	$$\operatorname{traj}^l\triangleq \operatorname{traj}^{s_{i_l}}_{m_{i_l}(\cdot)},\ \ \tau^l\triangleq \tau_{i_l},\ \  \gamma^l\triangleq \operatorname{traj}^l{}_\#\tau^l. $$

	Since the probabilities $\{\gamma^l\}$ are concentrated on the  set of $C_0$-Lipschtitz continuous functions from $[0,T]$ to $\td$ which is a compact subset of $\mathcal{C}$, $\{\gamma^l\}$ is relatively compact in $\mathcal{P}(\mathcal{C})$. Without loss of generality we can assume that $\{\gamma^l\}$ itself converges narrowly to some $\gamma\in\mathcal{P}(\mathcal{C})$. Moreover, $\gamma$ is concentrated on the set of $C_0$-Lipschitz continuous functions from $[0,T]$ to $\td$. 
	
	Put $\nu(t)\triangleq e_t{}_\#\gamma$. By (\ref{ineq:wass_motions}) we have that
	$$\lim_{l\rightarrow \infty}\sup_{t\in [0,T]}W_2(m_{i_l}(t),\nu(t))=0. $$
	
	It remains to prove that $$\nu(\cdot)=m(\cdot)\triangleq m(\cdot,s,\mu,\varkappa).$$ To this end let us show that $\nu(s)=\mu$ and $\gamma=\operatorname{traj}^*{}_\#\tau$, where 
	$\operatorname{traj}^*\triangleq \operatorname{traj}^{s}_{\nu(\cdot)}.$
	
	Since $m_{i_l}(s_{i_l})=\mu_i$ and $W_2(m_{i_l}(s),m_{i_l}(s_{i_l}))\leq C_0|s-s_{i_l}|$, we get that $\nu(s)=\mu$.
	
	Further, we have that, for any $y\in\td$, $\eta\in\mathcal{W}$,
	$$\|\operatorname{traj}^l(y,\eta)-\operatorname{traj}^*(y,\eta)\|\rightarrow 0\text{ as }l\rightarrow \infty. $$ The compactness of $\td$ and $\mathcal{W}$ implies that
	\begin{equation}\label{convergence:traj_l}
	\|\operatorname{traj}^l-\operatorname{traj}^*\|\triangleq\sup_{y\in\td}\sup_{\eta\in\mathcal{W}}\|\operatorname{traj}^l(y,\eta)-\operatorname{traj}^*(y,\eta)\|\rightarrow 0.
	\end{equation}
	
	Let $\phi\in C_b(\mathcal{C})$. Without loss of generality we can consider only the case when $\phi$ is $1$-Lipschitz continuous. We have that
	\begin{equation*}
	\begin{split}
	\Bigl|\int_{\mathcal{C}}&\phi(x(\cdot))\gamma^l(d(x(\cdot)))-\int_{\mathcal{C}}\phi(x(\cdot))(\operatorname{traj}^*{}_\#\tau)(d(x(\cdot)))\Bigr|\\ &=
	\Bigl|	\int_{\mathcal{C}}\phi(x(\cdot))(\operatorname{traj}^l{}_\#\tau^l)(d(x(\cdot)))- \int_{\mathcal{C}}\phi(x(\cdot))(\operatorname{traj}^*{}_\#\tau)(d(x(\cdot)))\Bigr| \\ &=
	\Bigl|	\int_{\td\times \mathcal{W}}\phi(\operatorname{traj}^l(y,\eta))\tau^l(d(y,\eta))-\int_{\td\times \mathcal{W}}\phi(\operatorname{traj}^*(y,\eta))\tau(d(y,\eta))\Bigr|\\ &=
	\Bigl|\int_{\td\times\mathcal{W}}\phi(\operatorname{traj}^l(y,\eta))\tau^l(d(y,\eta))-
	\int_{\td\times\mathcal{W}}\phi(\operatorname{traj}^*(y,\eta))\tau^l(d(y,\eta))\\ &{}\hspace{15pt}+
	\int_{\td\times\mathcal{W}}\phi(\operatorname{traj}^*(y,\eta))\tau^l(d(y,\eta))-
	\int_{\td\times\mathcal{W}}\phi(\operatorname{traj}^*(y,\eta))\tau(d(y,\eta))\Bigr|\\ &\leq
	\|\operatorname{traj}^l-\operatorname{traj}^*\|\\
	&{}\hspace{15pt}+
	\Bigl|\int_{\td\times\mathcal{W}}\phi(\operatorname{traj}^*(y,\eta))\tau^l(d(y,\eta))-
	\int_{\td\times\mathcal{W}}\phi(\operatorname{traj}^*(y,\eta))\tau(d(y,\eta))\Bigr|.
	\end{split}
	\end{equation*}
	
	Using (\ref{convergence:traj_l}) and the fact that $\tau^l$ converges narrowly to $\tau$ we get that $\gamma^l\rightarrow \operatorname{traj}^*{}_\#\tau$. At the same time $\gamma^l$ converges to $\gamma$. Thus, $\gamma=\operatorname{traj}^*{}_\#\tau$.  Taking into account the equality $\tau=\mu\star\varkappa$ we obtain the second statement of the lemma.
\end{proof}

The following lemmas are concerned with the sequences $\{\omega_k\}$ and $\{\omega^k\}$ Recall that $\{\omega_k\}$ is defined by (\ref{intro:omega_null}) and (\ref{intro:omega_k}), whereas the sequence $\{\omega^k\}$ is given by (\ref{intro:omega_v_null}), (\ref{intro:omega_v_k}).

Denote by $\operatorname{LCF}$ (respectively, $\operatorname{UCF}$) the set of all lower (respectively, upper) semicountinuous functions from $[0,T]\times \mathcal{P}^2(\td)$ to $\mathbb{R}$.

\begin{lemma}\label{lm:semicont}
	For any $k\in\mathbb{N}\cup\{0\}$, 
	$$\omega_k\in \operatorname{LCF}, \ \ \omega^k\in \operatorname{UCF}$$
\end{lemma}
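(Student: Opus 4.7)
I would prove both statements by induction on $k$ with Lemma \ref{lm:cont_dependence} as the only substantive tool. The claim for $\omega^k$ follows by the fully symmetric argument (swap $\sup\leftrightarrow\inf$, $\mathcal{A}^c\leftrightarrow \mathcal{B}^c$, $\mathcal{D}_1\leftrightarrow \mathcal{D}_2$, LSC $\leftrightarrow$ USC, using the obvious first-player twin of Lemma \ref{lm:cont_dependence}, proved identically), so I describe only the plan for $\omega_k\in\operatorname{LCF}$.

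For the base case $k=0$, I fix $(s,\mu)$, a sequence $(s_i,\mu_i)\to (s,\mu)$ and $\varepsilon>0$. First I select a near-optimal $\beta\in\mathcal{B}^c$ for the outer sup at $(s,\mu)$, so that $\inf_{\varkappa\in\mathcal{D}_2[\beta]} g(m(T,s,\mu,\varkappa))\geq \omega_0(s,\mu)-\varepsilon$. Next I invoke Lemma \ref{lm:cont_dependence} to produce a sequence $\{\beta_i\}\subset\mathcal{B}^c$ associated with $(s_i,\mu_i,\beta)$, and pick, for each $i$, some $\varkappa_i\in\mathcal{D}_2[\beta_i]$ that is $\varepsilon$-close to the infimum, so that $g(m(T,s_i,\mu_i,\varkappa_i))\leq \omega_0(s_i,\mu_i)+\varepsilon$ (using the trivial bound $\omega_0(s_i,\mu_i)\geq \inf_{\varkappa\in \mathcal{D}_2[\beta_i]}g(m(T,s_i,\mu_i,\varkappa))$). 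The second conclusion of Lemma \ref{lm:cont_dependence} supplies a subsequence $\{i_l\}$ and some $\varkappa\in\mathcal{D}_2[\beta]$ with uniform flow convergence to $m(\cdot,s,\mu,\varkappa)$; continuity of $g$ then yields $\liminf_l\omega_0(s_{i_l},\mu_{i_l})\geq g(m(T,s,\mu,\varkappa))-\varepsilon\geq \omega_0(s,\mu)-2\varepsilon$. A subsequence-of-every-subsequence argument promotes this to the full sequence, and sending $\varepsilon\downarrow 0$ gives LSC of $\omega_0$.

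For the inductive step, I assume $\omega_{k-1}\in\operatorname{LCF}$ and rerun the scheme above, except that the near-optimal data for $\omega_k(s,\mu)=\Phi[\omega_{k-1}](s,\mu)$ now comprise a pair $(r,\beta)$ with $r\in [s,T]$. I set $r_i\triangleq \max\{r,s_i\}\in [s_i,T]$, so that $r_i\to r$. After constructing $\beta_i$, $\varkappa_i$ and extracting a subsequence with $m_{i_l}(\cdot)\to m(\cdot,s,\mu,\varkappa)$ uniformly, continuity of $m(\cdot,s,\mu,\varkappa)$ combined with $r_{i_l}\to r$ yields $W_2(m_{i_l}(r_{i_l}),m(r,s,\mu,\varkappa))\to 0$; lower semicontinuity of $\omega_{k-1}$ then produces $\liminf_l\omega_{k-1}(r_{i_l},m_{i_l}(r_{i_l}))\geq\omega_{k-1}(r,m(r,s,\mu,\varkappa))\geq \omega_k(s,\mu)-\varepsilon$, and the argument closes as in the base case.

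The one delicate point is precisely the inductive step: three approximations must be juggled simultaneously --- the time $r_i$, the control profile $\beta_i$ furnished by Lemma \ref{lm:cont_dependence}, and an approximate minimizer $\varkappa_i\in\mathcal{D}_2[\beta_i]$ --- and one must verify that the evaluation point $(r_{i_l},m_{i_l}(r_{i_l}))$ really converges to $(r,m(r,s,\mu,\varkappa))$ along some subsequence in order to unlock the inductive hypothesis. The choice $r_i=\max\{r,s_i\}$ handles the possibility $r=s<s_i$, and the uniformity in $t$ of the flow convergence delivered by Lemma \ref{lm:cont_dependence} is essential because $r_i$ varies with $i$.
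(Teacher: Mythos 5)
Your proof is correct and follows essentially the same route as the paper: the $k=0$ case is argued exactly as in the text (near-optimal $\beta$, the $\beta_i$ and compactness/convergence from Lemma \ref{lm:cont_dependence}, continuity of $g$, then $\varepsilon\downarrow 0$). The paper only writes out $k=0$ and declares the rest "similar"; your inductive step, with $r_i=\max\{r,s_i\}$ and the use of uniform-in-$t$ flow convergence to evaluate $\omega_{k-1}$ at the moving point $(r_{i_l},m_{i_l}(r_{i_l}))$, is a faithful and somewhat more careful spelling-out of that omitted argument.
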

\begin{proof}
	We  prove only the statement that $\omega_0\in\operatorname{LCF}$, as the statements for $\omega_k$, $k\in\mathbb{N}$ and $\omega^k$ can be obtained similarly. Let $s_i\rightarrow s$, $\mu_i\rightarrow \mu$. 
	Given $\varepsilon>0$, there exists $\beta_\varepsilon$ such that
	\begin{equation}\label{ineq:omega_0_choice}
	\omega_0(s,\mu)\leq \inf_{\varkappa\in\mathcal{D}_2[\beta_\varepsilon]}g(m(T,s,\mu,\varkappa))+\varepsilon.
	\end{equation} Let a sequence of distributions $\{\beta_{\varepsilon,i}\}\subset\mathcal{B}^c$ be such that $\mu_i\star\beta_{\varepsilon,i}$ converges narrowly to $\mu\star\beta_\varepsilon$. The existence of this sequence is ensured by   Lemma \ref{lm:cont_dependence}. Obviously,
	$$\omega_0(s_i,\mu_i)\geq \inf_{\varkappa\in\mathcal{D}_2[\beta_{\varepsilon,i}]}g(m(T,s_i,\mu_i,\varkappa)). $$  Choose $\varkappa_{\varepsilon,i}\in\mathcal{D}_2[\beta_{\varepsilon,i}]$ satisfying
	$$\omega_0(s_i,\mu_i)\geq g(m(T,s_i,\mu_i,\varkappa_{\varepsilon,i}))-\varepsilon. $$
	
	By Lemma \ref{lm:cont_dependence} and the continuity of $g$ there exists a distribution of controls $\varkappa_\varepsilon\in\mathcal{D}_2[\beta_\varepsilon]$ such that
	$$\liminf_{i\rightarrow\infty}\omega_0(s_i,\mu_i)\geq g(m(T,s,\mu,\varkappa_\varepsilon))-\varepsilon.  $$ Using (\ref{ineq:omega_0_choice}), we get
	$$\liminf_{i\rightarrow\infty}\omega_0(s_i,\mu_i)\geq\omega_0(s,\mu)-2\varepsilon.  $$ Passing to the limit when $\varepsilon\rightarrow 0$, we conclude that the function $\omega_0$ is lower semicontinuous.
\end{proof}

\begin{lemma}\label{lm:pim} There exists a limit 
	$$\omega_*(s,\mu)\triangleq \lim_{k\rightarrow\infty}\omega_k(s,\mu)<\infty. $$
	The function $\omega_*$ is $u$-stable. 
\end{lemma}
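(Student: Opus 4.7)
The plan is to show that $\omega_*$ exists as a pointwise limit, that it inherits lower semicontinuity from the iterates, and that it satisfies the dynamic $u$-stability condition; the terminal inequality comes essentially for free. First I would establish monotonicity and boundedness of $\{\omega_k\}$. The operator $\Phi$ is monotone with respect to pointwise order, and $\Phi[\omega] \geq \omega$ for every $\omega$ because the choice $r = s$ in the definition forces $m(s, s, \mu, \varkappa) = \mu$ and collapses the integrand to $\omega(s, \mu)$. Hence $\omega_{k+1} \geq \omega_k$ by induction. Continuity of $g$ on the compact $\mathcal{P}^2(\td)$ gives $g \leq M$ for some constant $M$, whence $\omega_0 \leq M$, and monotonicity of $\Phi$ propagates this bound to $\omega_k \leq M$ for every $k$; the pointwise limit $\omega_* = \sup_k \omega_k = \lim_k \omega_k$ thus exists and is bounded. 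Lemma \ref{lm:semicont} yields $\omega_k \in \operatorname{LCF}$, so $\omega_*$ is lower semicontinuous as a supremum of such functions, while $\omega_0(T, m) = g(m)$ immediately gives $\omega_*(T, m) \geq g(m)$.

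The heart of the argument is verification of the dynamic $u$-stability condition of Definition \ref{def:u_stable}. Fix $s < r$ in $[0, T]$, $m_* \in \mathcal{P}^2(\td)$ and $\beta \in \mathcal{B}^c$. Combining $\omega_* \geq \omega_{k+1}$ with the identity $\omega_{k+1} = \Phi[\omega_k]$ and discarding the suprema over $r$ and $\beta$ gives
$$\omega_*(s, m_*) \;\geq\; \inf_{\varkappa \in \mathcal{D}_2[\beta]} \omega_k(r, m(r, s, m_*, \varkappa))$$
for every $k$. I select a near-minimizer $\varkappa_k \in \mathcal{D}_2[\beta]$ with $\omega_k(r, m(r, s, m_*, \varkappa_k)) \leq \omega_*(s, m_*) + 1/k$. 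Applying Lemma \ref{lm:cont_dependence} with the constant sequences $\mu_i \equiv m_*$ and $\beta_i \equiv \beta$ (the last sentence of that lemma permits this) produces a subsequence $\{\varkappa_{k_l}\}$ and some $\varkappa^* \in \mathcal{D}_2[\beta]$ such that $m(\cdot, s, m_*, \varkappa_{k_l}) \to m(\cdot, s, m_*, \varkappa^*)$ uniformly on $[0, T]$. For any fixed $j$, monotonicity gives, once $k_l \geq j$, the estimate $\omega_j(r, m(r, s, m_*, \varkappa_{k_l})) \leq \omega_{k_l}(r, m(r, s, m_*, \varkappa_{k_l})) \leq \omega_*(s, m_*) + 1/k_l$. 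Letting $l \to \infty$ and using lower semicontinuity of $\omega_j$ together with convergence of the $t = r$ slices,
$$\omega_j(r, m(r, s, m_*, \varkappa^*)) \;\leq\; \omega_*(s, m_*);$$
taking the supremum over $j$ yields $\omega_*(r, m(r, s, m_*, \varkappa^*)) \leq \omega_*(s, m_*)$, which is exactly the required stability.

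The principal obstacle is this last step: we must commute a limit with an infimum over the infinite-dimensional relaxed-control set $\mathcal{D}_2[\beta]$, while only having lower semicontinuity of the iterates. The device that rescues the argument is to take near-minimizers at each level $k$ and to invoke the compactness-and-continuous-dependence result of Lemma \ref{lm:cont_dependence}, after which lower semicontinuity of each $\omega_j$ is applied safely through a \emph{liminf}. Attempting instead to exchange the infimum with the monotone limit $\omega_* = \sup_k \omega_k$ directly would point the wrong way, since $\inf_\varkappa \sup_k \geq \sup_k \inf_\varkappa$ in general; the diagonalization on $j$ and $k_l$ is precisely what avoids this pitfall.
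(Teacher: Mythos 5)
Your argument is correct and follows essentially the same route as the paper: monotonicity and boundedness of $\{\omega_k\}$ give the limit, lower semicontinuity passes to the supremum, and the stability inequality is obtained by taking $1/k$-near-minimizers $\varkappa_k\in\mathcal{D}_2[\beta]$, extracting a convergent subsequence via Lemma \ref{lm:cont_dependence} with constant data, and passing to the limit through the lower semicontinuity of each fixed $\omega_j$. The only difference is cosmetic: the paper phrases the final limit passage in terms of distances to the nested epigraphs $\mathrm{epi}(\omega_{k_l-1})$ and the closedness of $\mathrm{epi}(\omega_*)$, whereas you run the equivalent pointwise $\liminf$ diagonalization in $j$ and $k_l$.
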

\begin{proof}
	First, notice that
	\begin{equation}\label{ineq:omega_k_k_plus}
	\omega_{k+1}(s,\mu)\geq \omega_k(s,\mu).
	\end{equation} Moreover,
	$$\omega_k(s,\mu)\leq \sup_{\varkappa\in \mathcal{D}}g(m(T,s,\mu,\varkappa))<\infty. $$ This and (\ref{ineq:omega_k_k_plus}) imply that
	the function $\omega_*$ is well-defined and proper.  Since the functions $\omega_k$ are lower semicontinuous (see Lemma \ref{lm:semicont}), $\omega_*$ is also  lower semicontinuous. 
	
	Further, for all $k\in\mathbb{N}$,
	$$\omega_*(T,\mu)=\omega_k(T,\mu)=g(T,\mu). $$
	
	It remains to prove that, for any $s\in [0,T]$, $\mu\in\mathcal{P}^2(\td)$, $\beta\in \mathcal{B}^c$, $r\in [s,T]$, there exists a distribution of controls $\varkappa\in\mathcal{D}_2[\beta]$ such that
	\begin{equation}\label{ineq:u_stability_star}
	\omega_*(s,\mu)\geq \omega_*(r,m(r,s,\mu,\varkappa)).
	\end{equation}
	Given $s,r\in [0,T]$, $s<r$, $\mu\in \mathcal{P}^2(\td)$ and $\beta\in\mathcal{B}^c$, let $\varkappa_k\in \mathcal{D}_2[\beta]$ be such that
	\begin{equation}\label{ineq:omega_k}
	\omega_{k}(s,\mu)\geq \omega_{k-1}(r,m(r,s,\mu,\varkappa_k)) -\frac{1}{k}.
	\end{equation} Using Lemma \ref{lm:cont_dependence}, we get that there exists a sequence $\{\varkappa_{k_l}\}$ and a distribution of controls $\varkappa_*\in\mathcal{D}_2[\beta]$ such that
	$$W_2(m(r,s,\mu,\varkappa_{k_l}),m(r,s,\mu,\varkappa_*))\rightarrow 0. $$ 
	
	Since the functions $\omega_k$ and $\omega_*$ are lower semicontinuous, the definition of $\omega_*$ means that
	\begin{equation}\label{equal:limit_omega}
	\mathrm{epi}(\omega_*)=\bigcap_{k=0}^\infty \mathrm{epi}(\omega_k)=\bigcap_{l=1}^\infty\mathrm{epi}(\omega_{k_l-1}).
	\end{equation} Hereinafter, $\operatorname{epi}(\varphi)$ stands for the epigraph of the functions $\varphi$. Moreover, the set $\operatorname{epi}(\omega_*)$ is closed. 
	
	If we denote $z^l\triangleq \omega_{k_l}(s,\mu)$, $z^*\triangleq \omega_*(s,\mu)$, $\nu^l\triangleq m(r,s,\mu,\varkappa_{k_l}),$ $\nu^*\triangleq m(r,s,\mu,\varkappa_*)$, then (\ref{ineq:omega_k}) implies that
	\begin{equation}\label{eneq:dist_epi}
	\mathrm{dist}((r,\nu^l,z^l),\mathrm{epi}(\omega_{k_l-1}))\leq \frac{1}{k_l}. 
	\end{equation} Here $\operatorname{dist}$ stands for the distance between a point and a set. Passing to the limit in (\ref{eneq:dist_epi}), and using  equality (\ref{equal:limit_omega}), we get that
	$$\mathrm{dist}((r,\nu^*,z^*),\mathrm{epi}(\omega_{*}))=0. $$ This is equivalent to (\ref{ineq:u_stability_star}). 
\end{proof}
Lemma \ref{lm:pim} and Theorem \ref{th:extremal} immediately imply
\begin{corollary}\label{cor:gamma_1_omega} For any $s\in [0,T]$, $\mu\in\mathcal{P}^2(\td)$,  $$\Gamma_1(s,\mu)\leq \omega_*(s,\mu).$$
\end{corollary}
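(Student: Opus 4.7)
The plan is very short: apply Theorem \ref{th:extremal} to the candidate function $\psi_1 = \omega_*$. All the substantive work has already been done in the two results being cited, so the role of the proof is simply to verify that the hypotheses of Theorem \ref{th:extremal} are met by $\omega_*$ and to invoke its conclusion.

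More concretely, the first step is to recall from Lemma \ref{lm:pim} that the pointwise limit $\omega_*(s,\mu)=\lim_{k\to\infty}\omega_k(s,\mu)$ exists and is finite on $[0,T]\times\mathcal{P}^2(\td)$, that $\omega_*$ is lower semicontinuous (as a monotone supremum of the lower semicontinuous iterates $\omega_k$ produced in Lemma \ref{lm:semicont}), and, crucially, that $\omega_*$ satisfies the $u$-stability property of Definition \ref{def:u_stable}: for any $s<r$, any $m_*\in\mathcal{P}^2(\td)$ and any $\beta\in\mathcal{B}^c$, there is $\varkappa\in\mathcal{D}_2[\beta]$ with $\omega_*(s,m_*)\geq\omega_*(r,m(r,s,m_*,\varkappa))$, together with the boundary condition $\omega_*(T,\mu)=g(\mu)\geq g(\mu)$.

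With those verifications in hand, Theorem \ref{th:extremal} applied with $\psi_1 = \omega_*$ yields $\Gamma_1(s,\mu) \leq \omega_*(s,\mu)$ for every $(s,\mu)\in[0,T]\times\mathcal{P}^2(\td)$, which is exactly the claim. There is no real obstacle: the delicate points (construction of the extremal-shift strategy in Section \ref{sect:extremal}, and the closedness-of-epigraph argument used to establish $u$-stability of the limit in Lemma \ref{lm:pim}) are already settled, so the corollary is a one-line deduction packaged for later use in comparing $\Gamma_1$ to both $\omega_*$ and its counterpart $\omega^*$.
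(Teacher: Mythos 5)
Your proposal is correct and matches the paper exactly: the paper states that Corollary \ref{cor:gamma_1_omega} is an immediate consequence of Lemma \ref{lm:pim} (which gives that $\omega_*$ is a well-defined, finite, $u$-stable function) and Theorem \ref{th:extremal} applied with $\psi_1=\omega_*$. No further comment is needed.
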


The following statement is concerned with the sequence of functions $\{\omega^k\}_{k=0}^\infty$ introduced by (\ref{intro:omega_v_null}) and (\ref{intro:omega_v_k}).
\begin{lemma}\label{lm:omega_v_k_limit} For any $s\in [0,T]$, $\mu\in\mathcal{P}^2(\td)$, the limit $$\omega^*(s,\mu)\triangleq \lim_{k\rightarrow\infty}\omega^k(s,\mu)$$ is well-defined. The function $\omega^*$ is $v$-stable and
	$$\Gamma_2(s,\mu)\geq \omega^*(s,\mu),\ \ s\in [0,T], \ \ \mu\in\mathcal{P}^2(\td). $$
\end{lemma}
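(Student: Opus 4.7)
The plan is to mirror the structure of the proof of Lemma \ref{lm:pim} with all orderings reversed: I replace epigraphs by hypographs, exploit a decreasing (rather than increasing) sequence, and invoke the symmetric counterpart of Lemma \ref{lm:cont_dependence} in which the roles of the two players are interchanged. That counterpart (controls $\alpha\in\mathcal{A}^c$ and $\varkappa\in\mathcal{D}_1[\alpha]$, with $\mu_i\equiv\mu$) has a verbatim proof since $U$ and $V$ enter Lemma \ref{lm:cont_dependence}'s argument only through symmetric compactness and measurability manipulations.

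First I would check monotonicity and the boundary condition. Taking $r=s$ in the definition of $\Psi$ gives $m(r,s,\mu,\varkappa)=\mu$, hence $\omega^{k+1}(s,\mu)\le\omega^k(s,\mu)$; continuity of $g$ on the compact $\mathcal{P}^2(\td)$ provides a uniform lower bound, so the decreasing sequence $\{\omega^k(s,\mu)\}$ converges to a finite limit $\omega^*(s,\mu)$. Since every $\omega^k$ is upper semicontinuous by Lemma \ref{lm:semicont}, so is the pointwise infimum $\omega^*$. The boundary condition $g(\mu)\ge\omega^*(T,\mu)$ is immediate from $\omega^k(T,\mu)=g(\mu)$ for all $k$. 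For $v$-stability, I fix $s<r$, $\mu\in\mathcal{P}^2(\td)$, $\alpha\in\mathcal{A}^c$, and use $\omega^k=\Psi[\omega^{k-1}]$ to select $\varkappa_k\in\mathcal{D}_1[\alpha]$ with
\begin{equation*}
\omega^{k-1}(r,m(r,s,\mu,\varkappa_k))\ge\omega^k(s,\mu)-\tfrac{1}{k}.
\end{equation*}
The dual of Lemma \ref{lm:cont_dependence} supplies a subsequence $\{\varkappa_{k_l}\}$ and $\varkappa_*\in\mathcal{D}_1[\alpha]$ with $W_2(m(r,s,\mu,\varkappa_{k_l}),m(r,s,\mu,\varkappa_*))\to 0$. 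The monotonicity $\omega^{k+1}\le\omega^k$ means the hypographs $\operatorname{hyp}(\omega^k)$ form a nested decreasing family of closed sets (closed thanks to upper semicontinuity), with $\operatorname{hyp}(\omega^*)=\bigcap_{k\ge 0}\operatorname{hyp}(\omega^k)$. For any fixed $j\in\mathbb{N}$ and all $l$ large enough that $k_l-1\ge j$, the point $(r,m(r,s,\mu,\varkappa_{k_l}),\omega^{k_l}(s,\mu)-1/k_l)$ lies in $\operatorname{hyp}(\omega^{k_l-1})\subseteq\operatorname{hyp}(\omega^j)$; passing to the limit and using closedness yields $(r,m(r,s,\mu,\varkappa_*),\omega^*(s,\mu))\in\operatorname{hyp}(\omega^j)$ for every $j$, hence in $\operatorname{hyp}(\omega^*)$. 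This is exactly $\omega^*(s,\mu)\le\omega^*(r,m(r,s,\mu,\varkappa_*))$.

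The inequality $\Gamma_2(s,\mu)\ge\omega^*(s,\mu)$ then follows at once from the second assertion of Theorem \ref{th:extremal} applied to the $v$-stable function $\omega^*$. The main obstacle, as in Lemma \ref{lm:pim}, is the hypograph/limit argument: one must combine three ingredients in just the right order — the monotonicity of $\{\omega^k\}$, the upper semicontinuity of each iterate, and the subsequential compactness of controls furnished by (the dual of) Lemma \ref{lm:cont_dependence} — to extract a single limiting control $\varkappa_*\in\mathcal{D}_1[\alpha]$ that realizes the stability inequality simultaneously against all iterates $\omega^j$ and therefore against their infimum $\omega^*$.
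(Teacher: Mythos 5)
Your proposal is correct and follows essentially the route the paper intends: the paper's own "proof" of this lemma is just the remark that one replaces $g$ by $-g$ and interchanges the players in Lemma \ref{lm:pim} and Corollary \ref{cor:gamma_1_omega}, and your argument is exactly that dualization carried out explicitly (decreasing monotonicity via $r=s$, hypographs in place of epigraphs, the player-swapped version of Lemma \ref{lm:cont_dependence}, and Theorem \ref{th:extremal} for the final inequality). The only cosmetic difference is that you place the approximate maximizer exactly inside $\operatorname{hyp}(\omega^{k_l-1})$ rather than within distance $1/k_l$ of it, which is a slightly cleaner version of the same step.
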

The proof of this Lemma is analogous to the proof of Lemma \ref{lm:pim} and Corollary~\ref{cor:gamma_1_omega}. To prove the lemma it suffices to replace $g$ with $-g$ and interchange the players.

The further construction requires the formalization of the mean field type differential game in the class of strategies with memory. 

Let $\mathcal{M}_s$ denote the set of continuous functions $m(\cdot):[s,T]\rightarrow \mathcal{P}^2(\td)$.

\begin{definition}\label{def:strategy_memory}
	Let $s\in [0,T]$. We say that a function $p:[s,T]\times \mathcal{M}_{s}\rightarrow \mathcal{A}^c$ is a stepwise strategy with memory of the first player on $[s,T]$ if there exists a partition $\Delta=\{t_i\}_{i=0}^N$ of $[s,T]$ such that, for any $m'(\cdot),m''(\cdot)\in\mathcal{M}_{s}$ and $t\in [s,T]$, the property that $t\in [t_{i},t_{i+1})$, $m'(t_0)=m''(t_0)$, \ldots, $m'(t_i)=m''(t_i)$ implies the equality
	$$p[t,m'(\cdot)]=p[t_i,m'(\cdot)]=p[t,m''(\cdot)]=p[t_i,m''(\cdot)]. $$
\end{definition}

\begin{definition}\label{def:motion_stepwise}
	Let $s\in [0,T]$, $\mu\in\mathcal{P}^2(\td)$, $p$ be a stepwise strategy with memory of the first player on $[s,T]$. Moreover, assume that the partition of $[s,T]$ $\Delta=\{t_i\}_{i=0}^N$ determines $p$. The flow of probabilities $m(\cdot)$ is called generated by $s$, $\mu$ and $p$ if  there exist distributions of controls $\varkappa_i\in\mathcal{D}$  such that $m(s)=\mu$ and, for $i=0,\ldots,n-1$, $t\in [t_i,t_{i+1})$,
	$$m(t)=m(t,t_i,m(t_i),\varkappa_i), \ \ \varkappa_i\in\mathcal{D}_1^0[p[t_i,m(\cdot)]]. $$
\end{definition}
Denote the set of  flows of probabilities generated by $s$, $\mu$ and $p$ by
$\widehat{\mathcal{X}}_1(s,\mu,p)$. Put
\begin{equation*}\label{intro:hat_gamma}
\widehat{\Gamma}_1(s,\mu)\triangleq\inf_{p}\sup_{m(\cdot)\in\widehat{\mathcal{X}}_1(s,\mu,p)}g(m(T)).
\end{equation*} Note that if $\mathfrak{u}$ is a feedback strategy, $\Delta$ is a partition of $[s,T]$, then the corresponding stepwise strategy of the first player is given by
$$p_{s,\mathfrak{u},\Delta}[t,m(\cdot)]\triangleq \mathfrak{u}[t_i,m(t_i)],\ \ t\in [t_i,t_{i+1}). $$
Thus, 
\begin{equation}\label{ineq:gamma_hat_gamma}
\Gamma_1(s,\mu)\geq \widehat{\Gamma}_1(s,\mu).
\end{equation}

The stepwise strategies with memory of the second player are introduced in the same way. A function $q:[s,T]\times \mathcal{M}_{s}\rightarrow \mathcal{B}^c$ is called a stepwise strategy with memory of the second player on $[s,T]$ if one can find a partition $\Delta=\{t_i\}_{i=0}^N$ of $[s,T]$ such that, for all $m'(\cdot),m''(\cdot)\in\mathcal{M}_{s}$ and $t\in [s,T]$, $$q[t,m'(\cdot)]=q[t_i,m'(\cdot)]=q[t,m''(\cdot)]=q[t_i,m''(\cdot)]. $$ when $t\in [t_{i},t_{i+1})$, $m'(t_0)=m''(t_0)$, \ldots, $m'(t_i)=m''(t_i)$.

The set of flows of probabilities generated by $s$, $\mu$ and $q$ is denoted by $\widehat{\mathcal{X}}_2(s,\mu,q)$. Put 
\begin{equation*}
\widehat{\Gamma}_2(s,\mu)\triangleq\sup_{q}\inf_{m(\cdot)\in\widehat{\mathcal{X}}_2(s,\mu,q)}g(m(T)).
\end{equation*}
Clearly, $\Gamma_2(s,\mu)\leq \widehat{\Gamma}_2(s,\mu)$, $\widehat{\Gamma}_1(s,\mu)\geq\widehat{\Gamma}_2(s,\mu)$. Combining this inequality with (\ref{ineq:gamma_hat_gamma}), we get
\begin{equation}\label{ineq:gamma_hat_2}
\Gamma_1(s,\mu)\geq \widehat{\Gamma}_1(s,\mu)\geq \widehat{\Gamma}_2(s,\mu)\geq \Gamma_2(s,\mu).
\end{equation}

\begin{lemma}\label{lm:gamma_2_omega}
	For any $s\in [0,T]$, $\mu\in\mathcal{P}^2(\td)$, 
	$$\Gamma_1(s,\mu)=\widehat{\Gamma}_1(s,\mu)=\widehat{\Gamma}_2(s,\mu)= \omega_*(s,\mu). $$
\end{lemma}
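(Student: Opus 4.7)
The inequalities $\omega_*(s,\mu)\geq \Gamma_1(s,\mu)\geq \widehat{\Gamma}_1(s,\mu)\geq \widehat{\Gamma}_2(s,\mu)$ are already at hand (the first from Corollary~\ref{cor:gamma_1_omega}, the remaining two from~(\ref{ineq:gamma_hat_2})), so the task reduces to establishing the reverse inequality $\widehat{\Gamma}_2(s,\mu)\geq \omega_*(s,\mu)$. Since $\omega_k\nearrow\omega_*$ by the proof of Lemma~\ref{lm:pim}, it is enough to construct, for each $k\in\mathbb{N}\cup\{0\}$ and each $\varepsilon>0$, a stepwise strategy with memory $q=q_k^{\varepsilon}$ of the second player on $[s,T]$ such that
\[ \inf_{m(\cdot)\in\widehat{\mathcal{X}}_2(s,\mu,q)} g(m(T))\ \geq\ \omega_k(s,\mu)-\varepsilon. \]

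I would build $q_k^{\varepsilon}$ by induction on $k$, mirroring the recursive identity $\omega_{k+1}=\Phi[\omega_k]$ from~(\ref{intro:omega_k}). The base case $k=0$ follows directly from~(\ref{intro:omega_null}): pick $\beta^\varepsilon\in\mathcal{B}^c$ with $\inf_{\varkappa\in\mathcal{D}_2[\beta^\varepsilon]} g(m(T,s,\mu,\varkappa))\geq \omega_0(s,\mu)-\varepsilon$ and let $q_0^\varepsilon$ be the trivial stepwise strategy on the partition $\{s,T\}$ playing $\beta^\varepsilon$; Definition~\ref{def:motion_stepwise} gives the required bound at once. For the inductive step, the representation $\omega_{k+1}(s,\mu)=\sup_{r\in[s,T]}\sup_{\beta\in\mathcal{B}^c}\inf_{\varkappa\in\mathcal{D}_2[\beta]}\omega_k(r,m(r,s,\mu,\varkappa))$ supplies $r^*\in[s,T]$ and $\beta^*\in\mathcal{B}^c$ with $\inf_{\varkappa\in\mathcal{D}_2[\beta^*]}\omega_k(r^*,m(r^*,s,\mu,\varkappa))\geq \omega_{k+1}(s,\mu)-\varepsilon/2$. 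The strategy $q_{k+1}^{\varepsilon}$ then plays $\beta^*$ on $[s,r^*]$; at time $r^*$ it reads off the current state $\mu':=m(r^*)$ and continues on $[r^*,T]$ with the inductively produced strategy $q_k^{\varepsilon/2}$ issued from $(r^*,\mu')$. Combining the one-step inequality with the inductive guarantee yields $g(m(T))\geq \omega_{k+1}(s,\mu)-\varepsilon$, and sending first $\varepsilon\to 0$ and then $k\to\infty$ closes the chain of equalities.

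The main obstacle is a subtle conformity issue with Definition~\ref{def:strategy_memory}, which demands that the partition $\Delta$ of a stepwise strategy be fixed before play begins, whereas the continuation $q_k^{\varepsilon/2}$ invoked above carries a partition that a priori depends on the observed state $\mu'=m(r^*)$. I would resolve this by committing from the outset to a uniform partition of $[s,T]$ of mesh $\delta$ chosen small with respect to $(k,\varepsilon)$, and by carrying out the induction on this grid, i.e., restricting the outer supremum $\sup_{r\in[s,T]}$ in each application of $\Phi$ to grid values. The error introduced by rounding the ideal switching time $r^*$ to the nearest grid point can be controlled by combining the $C_0$-Lipschitz continuity in time of $m(\cdot,s,\mu,\varkappa)$ with the lower semicontinuity of $\omega_k$ from Lemma~\ref{lm:semicont} and the continuous dependence of flows granted by Lemma~\ref{lm:cont_dependence}; this absorbed error is then folded into the budget $\varepsilon$ at each level of the induction.
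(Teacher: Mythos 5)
Your argument coincides with the paper's proof: the same reduction to $\widehat{\Gamma}_2(s,\mu)\geq\omega_*(s,\mu)$ via (\ref{ineq:gamma_hat_2}) and Corollary~\ref{cor:gamma_1_omega}, and the same inductive construction of second-player stepwise strategies $q^{k,\varepsilon}_{s,\mu}$ achieving $g(m(T))\geq\omega_k(s,\mu)-\varepsilon\sum_{l\leq k}2^{-l}$ by playing a near-optimizing $\beta$ up to the near-optimizing time $r$ supplied by $\Phi[\omega_{k-1}]$ and then switching to the level-$(k-1)$ strategy issued from the observed state. The only divergence is your last paragraph: the paper simply concatenates the continuation $q^{n-1,\varepsilon}_{r,m(r)}$ without commenting on the fact that its partition depends on the observed measure $m(r)$, so your grid-and-rounding patch addresses a formalization wrinkle in Definition~\ref{def:strategy_memory} that the paper's own proof leaves implicit, not a gap in your argument.
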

\begin{proof}
	
	Given $\varepsilon>0$, for any $s,\mu$, we define the sequence of stepwise strategies with memory of the second player $\{q^{k,\varepsilon}_{s,\mu}\}_{k=0}^\infty$ such that, for any $m(\cdot)\in \widehat{\mathcal{X}}_2(s,\mu,q^{k,\varepsilon}_{s,\mu})$,
	\begin{equation}\label{ineq:strategy_omega_k}
	g(m(T))\geq \omega_k(s,\mu)-\varepsilon\sum_{l=0}^k 2^{-l}.
	\end{equation}
	
	To define $q^{0,\varepsilon}_{s,\mu}$ choose $\beta_{s,\mu}^{0,\varepsilon}\in\mathcal{B}^c$  such that
	$$\omega_0(s,\mu)\leq \inf_{\varkappa\in \mathcal{D}_2[\beta_{s,\mu}^{0,\varepsilon}]}g(m(T,s,\mu,\varkappa))+\varepsilon. $$ Put 
	\begin{equation*}
	q^{0,\varepsilon}_{s,\mu}[t,m(\cdot)]\triangleq \beta_{s,\mu}^{0,\varepsilon}.
	\end{equation*}
	Obviously, inequality (\ref{ineq:strategy_omega_k}) is fulfilled for $k=0$.
	
	Now, assume that the strategies $q^{0,\varepsilon}_{s,\mu},\ldots, q^{n-1,\varepsilon}_{s,\mu}$ are already constructed for any $s\in [0,T]$, $\mu\in \mathcal{P}^2(\td)$ and satisfy (\ref{ineq:strategy_omega_k}) for $k=n-1$.

	Let $r^{n,\varepsilon}_{s,\mu}$ and $\beta^{n,\varepsilon}_{s,\mu}$ be such that
	$$\omega_{n}(s,\mu)\leq \inf_{\varkappa\in \mathcal{D}_2[\beta^{n,\varepsilon}_{s,\mu}]}\omega_{n-1}(r^{n,\varepsilon}_{s,\mu},m(r^{n,\varepsilon}_{s,\mu},s,\mu,\varkappa))+\varepsilon 2^{-n}. $$
	Put
	$$q^{n,\varepsilon}_{s,\mu}[t,m(\cdot)]\triangleq \left\{
	\begin{array}{ll}
	\beta^{n,\varepsilon}_{s,\mu}, & t\in [s,r_{s,\mu}^{n,\varepsilon})\\
	q^{n-1}_{r_{s,\mu}^{n,\varepsilon},m(r_{s,\mu}^{k,\varepsilon})}[t,m(\cdot)], & t\in [r_{s,\mu}^{n,\varepsilon},T]
	\end{array}\right. $$
	
	We have that 
	\begin{equation*}
	\omega_{n}(s,\mu)\leq \inf_{m(\cdot)\in \widehat{\mathcal{X}}(s,\mu,q_{s,\mu}^{n,\varepsilon})}\omega_{n-1}(r_{s,\mu}^{n,\varepsilon},m(r_{s,\mu}^{n,\varepsilon}))+\varepsilon2^{-n}.
	\end{equation*}
	Combining this and the fact that (\ref{ineq:strategy_omega_k}) is fulfilled for $k=n-1$, we obtain the truthfulness of (\ref{ineq:strategy_omega_k}) for $k=n$.
	
	Now, let $z<\omega_*(s,\mu)$. There exists $k$ such that $z<\omega_k(s,\mu)$. Let $\varepsilon$ be from $(0,(\omega_k(s,\mu)-z)/2)$. It follows from (\ref{ineq:strategy_omega_k}) that there exists a stepwise strategy of the second player $q=q_{s,\mu}^{k,\varepsilon}$ such that
	$$\inf_{m(\cdot)\in\widehat{\mathcal{X}}(s,\mu,q)}g(m(T))\geq \omega_k(s,\mu)-2\varepsilon>z. $$ This implies that, for any $z<\omega_*(s,\mu)$,
	$$\widehat{\Gamma}_2(s,\mu)>z. $$ Hence,
	$$\widehat{\Gamma}_2(s,\mu)\geq \omega_*(s,\mu). $$ This, Corollary \ref{cor:gamma_1_omega} together with inequality (\ref{ineq:gamma_hat_2}) give the conclusion of the lemma. 
\end{proof}
\begin{lemma}\label{lm:gamma_omega_v}
	For all $s\in [0,T]$, $\mu\in\mathcal{P}^2(\td)$, the following inequality holds:
	$$\Gamma_2(s,\mu)=\widehat{\Gamma}_2(s,\mu)=\widehat{\Gamma}_1(s,\mu)= \omega^*(s,\mu). $$	
\end{lemma}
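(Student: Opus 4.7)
The plan is to mirror the proof of Lemma \ref{lm:gamma_2_omega} with the roles of the players interchanged, and then combine the resulting inequality with Lemma \ref{lm:omega_v_k_limit} and the chain of inequalities (\ref{ineq:gamma_hat_2}) to get a squeeze.

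More concretely, I would inductively construct a family of stepwise strategies with memory of the first player $\{p^{k,\varepsilon}_{s,\mu}\}_{k=0}^\infty$ such that, for every $m(\cdot)\in \widehat{\mathcal{X}}_1(s,\mu,p^{k,\varepsilon}_{s,\mu})$,
\begin{equation*}
g(m(T))\leq \omega^k(s,\mu)+\varepsilon\sum_{l=0}^k 2^{-l}.
\end{equation*}
For the base case, choose $\alpha^{0,\varepsilon}_{s,\mu}\in\mathcal{A}^c$ that is $\varepsilon$-optimal in the definition (\ref{intro:omega_v_null}) of $\omega^0(s,\mu)$, and set $p^{0,\varepsilon}_{s,\mu}[t,m(\cdot)]\triangleq \alpha^{0,\varepsilon}_{s,\mu}$. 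For the inductive step, pick $r^{n,\varepsilon}_{s,\mu}\in[s,T]$ and $\alpha^{n,\varepsilon}_{s,\mu}\in\mathcal{A}^c$ that are $\varepsilon 2^{-n}$-optimal in $\omega^n(s,\mu)=\Psi[\omega^{n-1}](s,\mu)$, and concatenate:
\begin{equation*}
p^{n,\varepsilon}_{s,\mu}[t,m(\cdot)]\triangleq\left\{
\begin{array}{ll}
\alpha^{n,\varepsilon}_{s,\mu}, & t\in[s,r^{n,\varepsilon}_{s,\mu}),\\
p^{n-1,\varepsilon}_{r^{n,\varepsilon}_{s,\mu},m(r^{n,\varepsilon}_{s,\mu})}[t,m(\cdot)], & t\in[r^{n,\varepsilon}_{s,\mu},T].
\end{array}\right.
\end{equation*}
The inclusion $\mathcal{D}\supset\mathcal{D}^0_1[\alpha]$ guarantees that every admissible response $\varkappa\in\mathcal{D}^0_1[\alpha^{n,\varepsilon}_{s,\mu}]$ of the second player on $[s,r^{n,\varepsilon}_{s,\mu})$ belongs to $\mathcal{D}_1[\alpha^{n,\varepsilon}_{s,\mu}]$, so the $\sup_{\varkappa\in\mathcal{D}_1[\alpha^{n,\varepsilon}_{s,\mu}]}$ bound in the definition of $\omega^n$ applies and the induction hypothesis propagates through the recursion.

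From this estimate, given $z>\omega^*(s,\mu)$, there is some $k$ with $z>\omega^k(s,\mu)$; choosing $\varepsilon\in(0,(z-\omega^k(s,\mu))/2)$ one obtains a stepwise strategy with memory $p=p^{k,\varepsilon}_{s,\mu}$ of the first player such that every $m(\cdot)\in\widehat{\mathcal{X}}_1(s,\mu,p)$ satisfies $g(m(T))<z$, hence $\widehat{\Gamma}_1(s,\mu)\leq \omega^*(s,\mu)$.

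Combining this inequality with Lemma \ref{lm:omega_v_k_limit} (which gives $\Gamma_2(s,\mu)\geq \omega^*(s,\mu)$) and the chain (\ref{ineq:gamma_hat_2}),
\begin{equation*}
\omega^*(s,\mu)\leq \Gamma_2(s,\mu)\leq \widehat{\Gamma}_2(s,\mu)\leq \widehat{\Gamma}_1(s,\mu)\leq \omega^*(s,\mu),
\end{equation*}
all four quantities coincide. The main technical obstacle, as in Lemma \ref{lm:gamma_2_omega}, is ensuring that the recursive concatenation of stepwise strategies with memory is itself a stepwise strategy with memory in the sense of Definition \ref{def:strategy_memory}, i.e.\ that the partitions glue together into a single finite partition and that the resulting map depends only on the history of $m(\cdot)$ at the grid times; this is verified exactly as in the proof of Lemma \ref{lm:gamma_2_omega} by forming the union of partitions and observing that after reaching $r^{n,\varepsilon}_{s,\mu}$ the continuation is determined solely by $m(r^{n,\varepsilon}_{s,\mu})$.
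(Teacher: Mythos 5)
Your proposal is correct and follows exactly the route the paper intends: the paper's own ``proof'' of Lemma~\ref{lm:gamma_omega_v} is just the remark that one mirrors Lemma~\ref{lm:gamma_2_omega} using the sequence $\{\omega^k\}$, which is precisely the player-swapped recursive construction of stepwise strategies with memory and the squeeze via Lemma~\ref{lm:omega_v_k_limit} and (\ref{ineq:gamma_hat_2}) that you carry out. The only cosmetic slip is citing the inclusion as $\mathcal{D}\supset\mathcal{D}^0_1[\alpha]$ where the one actually used is $\mathcal{D}^0_1[\alpha]\subset\mathcal{D}_1[\alpha]$, which you state correctly in the following clause.
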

The proof of this lemma is analogous to the proof of the previous lemma. It is based on the sequence $\{\omega^k\}$ introduced by (\ref{intro:omega_v_null}) and (\ref{intro:omega_v_k}).

\vspace{5pt}
\begin{proof}[Proof of Theorem   \ref{th:existence}]
The conclusion of Theorem \ref{th:existence} directly follows from Lemmas~\ref{lm:pim}, \ref{lm:omega_v_k_limit}, \ref{lm:gamma_2_omega}~and~\ref{lm:gamma_omega_v}.
\end{proof}

\section{Conclusion}
In the paper we construct the feedback strategies for the zero-sum first-order mean field type differential game. The construction relies on the notions of $u$- and $v$-stability. Furthermore, we prove the existence of the value function that is simultaneously $u$- and $v$-stable. These results can be regarded as a mean field analog of the Krasovskii-Subbotin theory for the finite dimensional differential games.

In the case of finite-dimensional differential game the $u$-stable (respectively, $v$-stable) function is a super- (respectively, sub-) solution to the corresponding Hamilton-Jacobi PDE. The extension of this result to the case of mean field type differential games is the subject of future work. Note that Cosso and Pham  studied  the second order mean field type differential game using nonanticipative strategy  (see \cite{Cosso_Pham}). In particular, they proved that the value function in this case is a viscosity solution of the corresponding Hamilton-Jacobi equation. This raises the question of the equivalence of the feedback  formalization and the approach based on  nonaticipative strategies.

In the paper we restrict our attention to the case of zero-sum games. Note that the case of nonzero-sum games is more complicated. In particular, the existence of Nash equilibria for finite dimensional differential games is proved only within the punishment approach \cite{Kononenko}, \cite{Tolwinski}.  Apparently, the punishment strategies can be also used to construct Nash equilibria for mean field type differential games.

\begin{acknowledgements} The author  would like to thank the anonymous referees for
	their valuable and helpful comments. % and suggestions.
	%If you'd like to thank anyone, place your comments here
	%and remove the percent signs.
\end{acknowledgements}

\bibliography{th_32_mfdg_dgaa}
\end{document}